\numberwithin{equation}{section}
\newtheorem{theorem}{Theorem}[section]
\newtheorem{corollary}[theorem]{Corollary}
\newtheorem{conjecture}[theorem]{Conjecture}
\newtheorem{lemma}[theorem]{Lemma}
\theoremstyle{definition}
\newtheorem{example}[theorem]{Example}
\theoremstyle{remark}
\newtheorem{remark}[theorem]{\bf\em Remark}
\begin{document}
\title[Diophantine equations and perturbations of symmetric Boolean functions]{Diophantine equations with binomial coefficients and perturbations of symmetric Boolean functions}

\author{Francis N. Castro}
\address{Department of Mathematics, University of Puerto Rico, San Juan, PR 00931}
\email{franciscastr@gmail.com}

\author{Oscar E. Gonz\'alez}
\address{Department of Mathematics, University of Puerto Rico, San Juan, PR 00931}
\email{oscar.gonzalez3@upr.edu}

\author{Luis A. Medina}
\address{Department of Mathematics, University of Puerto Rico, San Juan, PR 00931}
\email{luis.medina17@upr.edu}

\begin{abstract}
This work presents a study of perturbations of symmetric Boolean functions.  In particular, it establishes a connection between exponential sums of these perturbations and Diophantine equations
of the form
$$ \sum_{l=0}^n \binom{n}{l} x_l=0,$$
where $x_j$ belongs to some fixed bounded subset $\Gamma$ of $\mathbb{Z}$.  The concepts of trivially balanced symmetric Boolean function and sporadic balanced Boolean function are extended to this type of perturbations.
An observation made by Canteaut and Videau \cite{canteaut} for symmetric Boolean functions of fixed degree is extended.  To be specific, it is proved that, excluding the trivial cases,
balanced perturbations of fixed degree do not exist when the number of variables grows.  Some sporadic balanced perturbations are presented.  Finally, a beautiful but unexpected identity between 
perturbations of two very different symmetric Boolean functions is also included in this work.
\end{abstract}

\subjclass[2010]{05E05, 11T23, 11D04}
\date{\today}
\keywords{Binomial Diophantine equations, perturbations of symmetric Boolean functions, exponential sums, recurrences}

\maketitle
\section{Introduction}
The theory of Boolean functions is a beautiful area of combinatorics with vast applications to many areas of mathematics as well as outside the discipline.  Examples include electrical engineering, 
the theory of error-correcting codes and cryptography.  In the modern era, efficient implementations of Boolean functions with many variables is a challenging problem due to memory restrictions of current 
technology. Because of this, symmetric Boolean functions are good candidates for efficient implementations.   However, symmetry is a too special property and may imply that these implementations 
are vulnerable to attacks.  
For this reason, we study perturbations of symmetric Boolean functions.  These perturbations, which are the focus of \cite{cm2}, are not longer symmetric.  Nevertheless, the symmetry of the underlying 
function can be exploited in order to make fast calculations, to obtain recurrences, and, as it was done in \cite{cm2}, to obtain information about the asymptotic behavior.

In plenty of applications, especially the ones related to cryptography, it is important for Boolean functions to be balanced. A balanced Boolean function is one for which the number of zeros and the number 
of ones are equal in its truth table.  Balancedness of Boolean functions can be studied from the point of view of exponential sums, as it is done is this article.  This point of view is in fact a very active
area of research.  For some examples, please refer to \cite{sperber, ax, cm1, cm2, cm3, cgm, fspectrum, mm1, mm, mcsk, fdegree}.

The study of balancedness of symmetric Boolean functions is connected to the problem of bisecting binomial coefficients.    A solution $(\delta_0,\delta_1,\cdots, \delta_n)$ to the equation
\begin{equation}
\label{bisec}
 \sum_{l=0}^n x_l \binom{n}{l}=0,\,\,\, x_l \in \{-1,1\},
\end{equation}
is said to give a bisection of the binomial coefficients $\binom{n}{l}$, $0\leq l \leq n.$  The first detailed study of this connection was made by Mitchell \cite{mitchell}.  Other studies include 
Jefferies \cite{jeff} and Sarkar and Maitra \cite{sarkarmaitra}.  In this work, balancedness of the perturbations considered is linked to equation (\ref{bisec}), where the $x_l$'s now 
lie in a bounded subset $\Gamma$ of $\mathbb{Z}$ instead of in $\{-1,1\}$.  The concept of trivially balanced symmetric Boolean function and the concept of sporadic balanced symmetric Boolean function (which
was introduced in \cite{jeff}), are extended to these perturbations. 

A conjecture similar to the one presented in \cite{cls} for elementary symmetric Boolean functions seems to be true for the simplest 
type of the perturbations considered in this study. Also, similar to the case of symmetric Boolean functions, computations suggest that most balanced perturbations are trivially balanced.
This led us to study trivially balanced perturbations in more detail: we showed that once a perturbation of fixed degree is trivially balanced at one point, then it is trivially balanced at infinitely
many points.  In \cite{canteaut}, Canteaut and Videau observed that, excluding the trivial cases, balanced symmetric Boolean 
functions of fixed degree do not exist when the number of variables grows (this was recently proved in \cite{ggz}).  This result is extended to our perturbations, that is, it is proved that, excluding the trivial cases,
balanced perturbations of fixed degree do not exist when the number of variables grows.  Therefore, the search for sporadic balanced perturbations is of interest.  It is in this search 
that an striking identity between the perturbations of two different symmetric Boolean functions is used.  In particular, this identity allow us to obtained two sporadic balanced perturbations for ``the price of one".

This article is divided as follows.  The next section includes some preliminaries that are needed for the work presented in this manuscript.  In section \ref{pertuident}, a 
beautiful but unexpected 
identity between the perturbations of two very different symmetric Boolean functions is proved.  This identity is later used in section \ref{sporadicpert} when the search of sporadic perturbations is considered.  
In section \ref{diophsec}, the study of the link between perturbations and equation (\ref{bisec}) over bounded sets of integers is considered.  Some of the known results about balancedness of 
symmetric Boolean functions and bisections of binomial coefficients are extended.  Section \ref{numvariablesgrows} presents a study of balanced perturbations as the number of variables grows.  It is in 
this section where the observation of Canteaut and Videau is extended.  Finally, as mentioned before, some examples of sporadic perturbations are presented in section \ref{sporadicpert}.

\section{Preliminaries}
\label{prelim}
Let $\mathbb{F}_2$ be the binary field, $\mathbb{F}_2^{\,n} = \{(x_1,\ldots, x_n) | x_i \in \mathbb{F}_2, i = 1, . . . , n\}$, and $F({\bf X}) = F(X_1, \ldots,X_n)$ be a polynomial in $n$ variables over 
$\mathbb{F}_2$. The exponential sum associated to $F$ over $\mathbb{F}_2$ is
\begin{equation}
S(F)=\sum_{x_1,\ldots,x_n\in \mathbb{F}_2} (-1)^{F(x_1,\ldots,x_n)}.
\end{equation}
A Boolean function $F$ is called balanced if $S(F) = 0$, i.e. the number of zeros and the number of ones are equal in the truth table of $F$. This property is important for some applications in 
cryptography.

Any symmetric Boolean function is a linear combination of elementary symmetric polynomials.   Let $\sigma_{n,k}$ be the elementary symmetric polynomial in $n$ variables of degree $k$. For example,
\begin{equation}
\sigma_{4,3} = X_1 X_2 X_3+X_1 X_4 X_3+X_2 X_4 X_3+X_1 X_2 X_4.
\end{equation}
Every symmetric Boolean function can be identified with an expression of the form
\begin{equation}
\label{genboolsym}
\sigma_{n,k_1}+\sigma_{n,k_2}+\cdots+\sigma_{n,k_s},
\end{equation}
where $1\leq k_1<k_2<\cdots<k_s$ are integers.  For the sake of simplicity, the notation $\sigma_{n,[k_1,\cdots,k_s]}$ is used to denote (\ref{genboolsym}).  For example,
\begin{eqnarray}
\sigma_{3,[2,1]}&=&\sigma_{3,2}+\sigma_{3,1}\\ \nonumber
&=& X_1 X_2+X_3 X_2+X_1 X_3+X_1+X_2+X_3.
\end{eqnarray}
It is not hard to show that if $1\leq k_1 < k_2 < \cdots < k_s$ are fixed integers, then
\begin{equation}
\label{maingen}
S(\sigma_{n,[k_1,k_2,\cdots,k_s]}) =\sum_{l=0}^n (-1)^{\binom{l}{k_1}+\binom{l}{k_2}+\cdots+\binom{l}{k_s}}\binom{n}{l}.
\end{equation}
\begin{remark}
Observe that the right hand side of (\ref{maingen}) makes sense for $n\geq 1$, while the left hand side exists for $n\geq k_s$. Throughout the rest of the article, $S(\sigma_{n,[k_1,k_2,\cdots,k_s]})$ 
should be interpreted as the expression on the right hand side, so it makes sense to talk about ``exponential sums" of symmetric Boolean functions with less variables than their degrees. 
\end{remark}

In \cite{cm1}, Castro and Medina used (\ref{maingen}) to study exponential sums of  symmetric polynomials from the point of view of integer sequences.  As part of their study, they showed that the sequence 
$\{S(\sigma_{n,[k_1,\cdots,k_s]})\}_{n\in \mathbb{N}}$ satisfies the homogeneous linear recurrence 
\begin{equation}
\label{mainrec}
x_n=\sum_{l=1}^{2^r-1}(-1)^{l-1}\binom{2^r}{l}x_{n-l},
\end{equation}
where $r=\lfloor\log_2(k_s)\rfloor+1$ (this result also follows from \cite[Th. 3.1, p. 248]{cai})  and used this result to compute the asymptotic behavior $S(\sigma_{n,[k_1,\cdots,k_s]})$ as $n\to \infty$.  
To be specific,
\begin{equation}
\label{asymplimit}
\lim_{n\to\infty}\frac{1}{2^n}S(\sigma_{n,[k_1,\cdots,k_s]}) = c_0(k_1,\cdots, k_s)
\end{equation}
where 
\begin{equation}
\label{theconstantscj}
c_0(k_1,\cdots,k_s) = \frac{1}{2^r} \sum_{l=0}^{2^r-1}(-1)^{\binom{l}{k_1}+\cdots+\binom{l}{k_s}}.
\end{equation}
They used this concept to show that a conjecture of Cusick, Li and St$\check{\mbox{a}}$nic$\check{\mbox{a}}$ is true asymptotically (this result was 
recently re-established in \cite{ggz}).    See \cite{cm1} for more details.

In the case of the elementary symmetric polynomial, the same authors were able to improve (\ref{mainrec}) and reduced the degree of the homogeneous linear recurrence with integer coefficients that its 
exponential sums satisfy.  They did this by finding the minimal homogeneous linear recurrence with integer coefficients that $\{S(\sigma_{n,k})\}$ satisfies.  To be specific, let $\epsilon(n)$ be defined as
\begin{equation}
\epsilon(n)=\left\{\begin{array}{cl}
    0, & \text{if }n \text{ is a power of 2,}  \\
    1, & \text{otherwise.}
       \end{array}\right.
\end{equation}
Then, the following result holds (see \cite{cm1}).
\begin{theorem}
\label{charpolyn}
Let $k$ be a natural number and $p_k(X)$ be the characteristic polynomial associated to the minimal linear recurrence with integer coefficients that $\{S(\sigma_{n,k})\}_{n\in\mathbb{N}}$ satisfies.  
Let $\bar{k}=2\lfloor k/2\rfloor+1$.  Express $\bar{k}$ as its $2$-adic expansion 
\begin{equation}
\bar{k}=1+2^{a_1}+2^{a_2}+\cdots+2^{a_s},
\end{equation}
where the last exponent is given by $a_s=\lfloor \log_2(\bar{k})\rfloor.$  Then,
\begin{equation}
\label{charpoly}
p_k(X)=(X-2)^{\epsilon(k)}\prod_{l=1}^s \Phi_{2^{a_l+1}}(X-1).
\end{equation}
In particular, the degree of the minimal linear recurrence that $\{S(\sigma_{n,k})\}_{n\in\mathbb{N}}$ satisfies is equal to $2\lfloor k/2 \rfloor + \epsilon(k)$.
\end{theorem}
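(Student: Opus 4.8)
The plan is to diagonalize the binomial transform that appears in \eqref{maingen}. Put $c_\ell=(-1)^{\binom{\ell}{k}}$, so that $S(\sigma_{n,k})=\sum_{\ell=0}^{n}\binom{n}{\ell}c_\ell$. By Lucas' theorem the parity of $\binom{\ell}{k}$ depends only on $\ell$ modulo $2^r$, where $r=\lfloor\log_2 k\rfloor+1$, so $\ell\mapsto c_\ell$ is periodic with period dividing $2^r$. I would fix a primitive $2^r$-th root of unity $\omega$, expand $c_\ell=\sum_{j=0}^{2^r-1}\hat c_j\,\omega^{j\ell}$ with $\hat c_j=2^{-r}\sum_{\ell=0}^{2^r-1}c_\ell\,\omega^{-j\ell}$, interchange the two summations, and apply the binomial theorem to obtain the closed form
\[ S(\sigma_{n,k})=\sum_{j=0}^{2^r-1}\hat c_j\,(1+\omega^j)^{\,n}. \]
Thus $\{S(\sigma_{n,k})\}_n$ is a linear combination of geometric sequences with the pairwise distinct ratios $1+\omega^j$; the ratio $1+\omega^j=0$ occurs only for $\omega^j=-1$ and contributes nothing once $n\ge1$. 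Consequently the minimal linear recurrence with rational coefficients has characteristic polynomial $q(X)=\prod\,(X-(1+\omega^j))$, the product running over those $j$ with $\hat c_j\ne0$ and $\omega^j\ne-1$; since $q$ will turn out to be monic with integer coefficients, it is equally the minimal integer recurrence. The whole problem is thereby reduced to determining the support $\{\,j:\hat c_j\ne0\,\}$.

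To locate this support I would use Lucas' theorem once more. Writing $k=\sum_{i\in T}2^i$ in binary, with $T\subseteq\{0,\dots,r-1\}$ and $r-1\in T$, Lucas' congruence yields the compact description $c_\ell=1-2\prod_{i\in T}\ell_i$ for $0\le\ell<2^r$, where $\ell_i$ denotes the $i$-th binary digit of $\ell$. Inserting this together with $\omega^{-j\ell}=\prod_i(\omega^{-j2^i})^{\ell_i}$ into the definition of $\hat c_j$ makes the sum over $\ell$ factor over the bit positions, and it collapses to an explicit product:
\[ \hat c_j=\bigl[\,j\equiv 0\bmod 2^r\,\bigr]\;-\;2^{\,1-r}\Bigl(\prod_{i\in T}\omega^{-j2^i}\Bigr)\!\!\prod_{i\in\{0,\dots,r-1\}\setminus T}\!\!\bigl(1+\omega^{-j2^i}\bigr). \]
Since the first factor in the product is a root of unity, $\hat c_j$ vanishes exactly when $1+\omega^{-j2^i}=0$ for some $i\notin T$, and a brief computation with the $2$-adic valuation $v_2$ turns this into a clean criterion: for $j\ne0$ one has $\hat c_j\ne0$ if and only if $r-1-v_2(j)\in T$, while $\hat c_0=1-2^{1-|T|}$ is nonzero precisely when $|T|\ge2$, i.e.\ when $k$ is not a power of $2$.

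The last step is a congenial piece of bookkeeping. Group the indices $j$ by the order $2^m$ of $\omega^j$ (equivalently $v_2(j)=r-m$): as $j$ runs over one such class, $\omega^j$ runs over all primitive $2^m$-th roots of unity, so the corresponding numbers $1+\omega^j$ are exactly the roots of $\Phi_{2^m}(X-1)$, and by the criterion above they survive in $q(X)$ if and only if the bit of $k$ of weight $m-1$ equals $1$. The level $m=1$ produces the root $0$ (the factor $\Phi_2(X-1)=X$, which is always absent because $n\ge1$), and the level $m=0$ produces $X-2=\Phi_1(X-1)$ with multiplicity $\epsilon(k)$ from the $j=0$ analysis. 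Assembling the surviving levels gives
\[ q(X)=(X-2)^{\epsilon(k)}\!\!\prod_{\substack{2\le m\le r\\ \text{bit }m-1\text{ of }k\text{ equals }1}}\!\!\Phi_{2^m}(X-1)=(X-2)^{\epsilon(k)}\prod_{l=1}^{s}\Phi_{2^{a_l+1}}(X-1), \]
the last equality because $\{a_1,\dots,a_s\}$ is exactly the set of nonzero bit-positions of $k$ — precisely the effect of passing from $k$ to $\bar k=2\lfloor k/2\rfloor+1$, which merely adjoins the weight-$0$ bit that played no role above. The degree statement then follows at once: $\deg q=\epsilon(k)+\sum_{l}\varphi(2^{a_l+1})=\epsilon(k)+\sum_l 2^{a_l}=\epsilon(k)+(k-(k\bmod 2))=2\lfloor k/2\rfloor+\epsilon(k)$.

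I expect the crux of this plan to be the explicit factorization of $\hat c_j$ coming from Lucas' theorem, together with the ensuing $2$-adic vanishing criterion; once the support of $\hat c_j$ is pinned down, recognizing the cyclotomic factors $\Phi_{2^m}(X-1)$ and verifying that the resulting $q(X)$ is minimal among integer linear recurrences are routine.
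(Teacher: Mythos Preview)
The paper does not actually prove Theorem~\ref{charpolyn}; it is stated in the preliminaries section as a known result and attributed to \cite{cm1} (``Then, the following result holds (see \cite{cm1})''). So there is no in-paper proof to compare against. That said, your argument is sound and, in fact, is precisely in the spirit of the machinery the paper invokes elsewhere: your Fourier coefficients $\hat c_j$ are the constants $c_l(k)$ of Cai--Green--Thierauf that the paper quotes in Section~\ref{numvariablesgrows}, and your closed form $S(\sigma_{n,k})=\sum_j\hat c_j(1+\omega^j)^n$ is exactly equation~(\ref{expform}) specialized to $\sigma_{n,k}$.

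A few minor points worth tightening. First, when you pass from the minimal recurrence over $\mathbb{C}$ to the minimal recurrence \emph{with integer coefficients}, you should say explicitly that the annihilator ideal of an integer sequence in $\mathbb{Q}[X]$ is principal, generated by your $q(X)$, and since $q(X)$ is monic with integer coefficients it is automatically the minimal monic integer annihilator; this is routine but worth one sentence. Second, your treatment of the level $m=1$ is slightly glib: when $k$ is odd you do have $\hat c_{2^{r-1}}\ne 0$, so the factor $\Phi_2(X-1)=X$ \emph{would} appear in the naive product; it disappears only because you are working with the sequence for $n\ge 1$, where $0^n=0$. You say this, but the phrase ``always absent'' could be misread---make clear that it is absent from the minimal recurrence of the sequence indexed by $n\ge 1$, not from the Fourier support. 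Finally, in your degree count the identity $\sum_l 2^{a_l}=\bar k-1=2\lfloor k/2\rfloor$ is the clean way to finish; your detour through $k-(k\bmod 2)$ is correct but the direct route via $\bar k$ matches the statement more transparently.
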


In this article, perturbations of symmetric Boolean functions and their connection to solutions of (\ref{bisec}) over some bounded set of integers are considered.  Recall that $\sigma_{n,k}$ is the 
elementary symmetric polynomial of degree $k$ in the variables $X_1,\cdots, X_n$.  Suppose that $j<n$ and let $F({\bf X})$ be a binary polynomial in the variables $X_1, \cdots, X_j$ (the first $j$ variables 
in $X_1,\cdots, X_n$).  We are interested in exponential sums of polynomials of the form
\begin{equation}
\label{distor}
\sigma_{n,[k_1,\cdots,k_s]}+F({\bf X}),
\end{equation}
where $1\leq k_1<\cdots<k_s$.  Observe that perturbations of the form (\ref{distor}) are not necessarily symmetric.   

In \cite{cm2}, Castro and Medina showed that exponential sums of perturbations of the form (\ref{distor}) are related to exponential sums of symmetric Boolean functions via the following equation
\begin{equation}
\label{perturbationeq}
S(\sigma_{n,[k_1,\cdots,k_s]}+F({\bf X}))=\sum_{m=0}^j C_m(F)S\left(\sum_{i=0}^m\binom{m}{i}(\sigma_{n-j,[k_1-i,\cdots,k_s-i}])\right),
\end{equation}
where
\begin{equation}
C_m(F)=\sum_{{\bf x}\in \mathbb{F}_2 \text{ with }w_2({\bf x})=m}(-1)^{F({\bf x})},
\end{equation}
and $w_2({\bf x})$ represents the Hamming weight of ${\bf x}$, i.e. the number of entries of ${\bf x}$ that are one.
\begin{remark}
There are three things to observe about equation (\ref{perturbationeq}). First, it is clear that the value of $\binom{m}{i}$ that is inside the exponential sum can be taken mod 2, since only the parity 
matters.  Second, if $k_l-i <0$, then the term $\sigma_{n-j,k_l-i}$ does not exist and so it is not present in the equation.  Finally, in the case that $k_l-i=0$, the elementary polynomial $\sigma_{n-j,0}$ 
should be interpreted as 1.
\end{remark}
Equation (\ref{perturbationeq}) now implies that exponential sums of these type of perturbations also satisfy recurrence (\ref{mainrec}).  This is used in the next section when an identity between 
perturbations of two different symmetric Boolean polynomials is established.  It also serves as the link to equation (\ref{bisec}) over a bounded set of integers.

\section{Some perturbations identities}
\label{pertuident}
In this section we establish a very beautiful identity between perturbations of two different symmetric Boolean polynomials.  We start our discussion with a particular example.  The idea for doing this is to 
get an insight of what is behind this identity.  A proof for the general case will be provided later in this section once our intuition is solidified.

Consider the two polynomials $\sigma_{n,4}$ and $\sigma_{n,5}$ and their corresponding exponential sums:
\begin{equation}
\label{sums1}
S(\sigma_{n,4})=\sum_{l=0}^n (-1)^{\binom{l}{4}}\binom{n}{l} \,\,\, \text{ and } \,\,\, S(\sigma_{n,5})=\sum_{l=0}^n (-1)^{\binom{l}{5}}\binom{n}{l}.
\end{equation}
These sums seem similar, but they have very different behaviors.  For example, consider the expressions 
\begin{equation}
\label{seqnegone}
(-1)^{\binom{l}{4}}\,\,\, \text{ and } \,\,\, (-1)^{\binom{l}{5}},
\end{equation}
which are the coefficients of the binomial numbers in the sums (\ref{sums1}).  As $l$ ranges through the non-negative integers, both expressions in (\ref{seqnegone}) are periodic with period length 8.  In fact, their periods are given by
$$\begin{array}{|c|c|c|c|c|r|r|r|r|}
\hline
l & 0 & 1& 2& 3& 4& 5& 6& 7\\
\hline
(-1)^{\binom{l}{4}}& 1& 1& 1& 1& -1& -1& -1& -1 \\
(-1)^{\binom{l}{5}}& 1& 1& 1& 1& 1& -1& 1& -1\\
\hline
\end{array}$$
Observe that these periods differ in only two positions, but this difference has a big effect on the behavior of $S(\sigma_{n,4})$ and $S(\sigma_{n,5})$.  For instance, $S(\sigma_{n,4})=0$, 
i.e. $\sigma_{n,4}$ is balanced, whenever $n\equiv 7 \mod 8$.  In contrast, the exponential sum $S(\sigma_{n,5})$ is never zero.  Also, the exponential sum $S(\sigma_{n,4})$ assumes negative values for some 
values of $n$, but $S(\sigma_{n,5})$ is always positive. These claims are evident from the first few values of both sequences:
$$\begin{array}{|c|c|c|c|c|r|r|c|r|r|r|}
\hline
n & 1& 2& 3& 4& 5& 6& 7& 8& 9& 10\\
\hline
S(\sigma_{n,4})& 2& 4& 8& 14& 20& 20& 0& -68& -232& -560 \\
S(\sigma_{n,5})& 2& 4& 8& 16& 30& 52& 84& 128& 188& 280\\
\hline
\end{array}$$
Thus, it is clear that the sequences $\{S(\sigma_{n,4})\}$ and $\{S(\sigma_{n,5})\}$ have very different behavior.  However, both of them can be altered to make them, not just similar, but equal 
(up to a shift)!  The trick, in fact, is very simple: just add the linear polynomial $X_1$ to both symmetric polynomials, $\sigma_{n,4}$ and $\sigma_{n,5}$, to get
\begin{equation}
 S(\sigma_{n,4}+X_1)=S(\sigma_{n+1,5}+X_1).
\end{equation}
For example,
$$\begin{array}{|c|c|c|c|c|r|r|r|r|r|}
\hline
n & 2& 3& 4& 5& 6& 7& 8& 9& 10\\
\hline
S(\sigma_{n,4}+X_1)& 0& 0& 2& 8& 20& 40& 68& 96& 96 \\
S(\sigma_{n,5}+X_1)& 0& 0& 0& 2& 8& 20& 40& 68& 96\\
\hline
\end{array}$$
This trick not only works for $\sigma_{n,4}$ and $\sigma_{n,5}$, but also for $\sigma_{n,2k}$ and $\sigma_{n,2k+1}$ with $k$ a positive integer.  Explicitly,
\begin{equation}
\label{lingen}
 S(\sigma_{n,2k}+X_1)=S(\sigma_{n+1,2k+1}+X_1).
\end{equation}

We now begin our proof of identity (\ref{lingen}).  The bulk of the proof relies on the fact that sequences of the form $\{S(\sigma_{n,k})\}$ and $\{S(\sigma_{n,k}+F({\bf X}))\}$ satisfy linear recurrences
with integer coefficients.  To be more specific, the idea to establish identity (\ref{lingen}) is to show that both sequences satisfy the same recurrence.  Once this is done, then to show that 
identity (\ref{lingen}) holds, it is enough to show that both sequences have the same initial conditions.  We start our argument with the following result.

\begin{lemma}
\label{reclemma}
 Let $k>1$ and $m\geq1$ be fixed integers.  Consider the sequence
 \begin{equation}
 \label{seqdist}
\left\{S\left(\sum_{j=0}^m\binom{m}{j}\sigma_{n,k-j}\right)\right\}.  
 \end{equation}
 Then, sequence (\ref{seqdist}) satisfies the same homogeneous linear recurrence as $\{S(\sigma_{n,k})\}$.
\end{lemma}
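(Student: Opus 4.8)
The plan is to show that sequence (\ref{seqdist}) satisfies the recurrence (\ref{mainrec}) with $r=\lfloor\log_2 k\rfloor+1$, which is precisely the recurrence that $\{S(\sigma_{n,k})\}$ is known to satisfy. The first step is to collapse the linear combination of elementary symmetric polynomials into a single binomial exponent. At a point of Hamming weight $l$ one has $\sigma_{n,i}=\binom{l}{i}\bmod 2$, and there are $\binom{n}{l}$ such points, so by the reasoning behind (\ref{maingen}),
\begin{equation*}
S\left(\sum_{j=0}^m\binom{m}{j}\sigma_{n,k-j}\right)=\sum_{l=0}^n(-1)^{\sum_{j=0}^m\binom{m}{j}\binom{l}{k-j}}\binom{n}{l}.
\end{equation*}
By Vandermonde's identity $\sum_{j=0}^m\binom{m}{j}\binom{l}{k-j}=\binom{l+m}{k}$, read with the conventions $\binom{l}{i}=0$ for $i<0$ and $\binom{l}{0}=1$ (which are exactly the conventions governing the terms $\sigma_{n,k-j}$ noted in the remark after (\ref{perturbationeq})), the right-hand side becomes $\sum_{l=0}^n(-1)^{\binom{l+m}{k}}\binom{n}{l}$. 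Hence sequence (\ref{seqdist}) is produced from the binomials $\binom{n}{l}$ by the coefficient function $c(l)=(-1)^{\binom{l+m}{k}}$, just as $\{S(\sigma_{n,k})\}$ is produced by $c(l)=(-1)^{\binom{l}{k}}$.

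The second step is to observe that $c(l)=(-1)^{\binom{l+m}{k}}$ depends only on $l\bmod 2^r$. Indeed, by Lucas' theorem $\binom{l+m}{k}$ is odd exactly when every binary digit of $k$ is at most the corresponding digit of $l+m$; since $k<2^r$, only the $r$ least significant digits of $l+m$ enter, so $(-1)^{\binom{l+m}{k}}$ is determined by $(l+m)\bmod 2^r$, hence by $l\bmod 2^r$. In particular $c(l+2^r)=c(l)$ for all $l\geq 0$, exactly as happens for $(-1)^{\binom{l}{k}}$.

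The third step is a general and completely elementary fact: if $c:\mathbb{Z}_{\geq 0}\to\mathbb{Z}$ satisfies $c(l+2^r)=c(l)$ for all $l$, then $b_n:=\sum_{l\geq 0}c(l)\binom{n}{l}$ satisfies (\ref{mainrec}). Pascal's rule gives $b_{n+1}=\sum_{l\geq 0}c(l)\left(\binom{n}{l}+\binom{n}{l-1}\right)=b_n+\sum_{l\geq 0}c(l+1)\binom{n}{l}$, so the forward difference $\Delta b_n=b_{n+1}-b_n$ simply shifts the coefficient function by one, and by iteration $\Delta^{2^r}b_n=\sum_{l\geq 0}c(l+2^r)\binom{n}{l}=b_n$. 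Thus $\{b_n\}$ is annihilated by $(E-I)^{2^r}-I$, with $E$ the shift operator, whose characteristic polynomial $(X-1)^{2^r}-1=\sum_{l=0}^{2^r-1}(-1)^l\binom{2^r}{l}X^{2^r-l}$ is exactly the characteristic polynomial of (\ref{mainrec}). Applying this with $c(l)=(-1)^{\binom{l+m}{k}}$ completes the proof.

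I do not expect any serious obstacle here: the difference-operator computation and the Lucas/periodicity argument are the same ones that underlie (\ref{mainrec}) in \cite{cm1}, so the only genuinely new ingredient is the Vandermonde collapse in the first step, where the one thing to get right is the bookkeeping of the boundary terms $k-j\leq 0$ — and these are matched precisely by $\binom{l}{k-j}=0$ for $j>k$ and $\binom{l}{0}=1$ for $j=k$. If desired, one can push a bit further: running the same computation with the discrete Fourier expansion of $c$ over the $2^r$-th roots of unity in place of the difference operator shows that sequence (\ref{seqdist}) even satisfies the \emph{minimal} recurrence of $\{S(\sigma_{n,k})\}$ from Theorem \ref{charpolyn}, since replacing $l$ by $l+m$ only multiplies the Fourier coefficient at a root $\zeta$ by $\zeta^{m}$ and therefore leaves its support unchanged.
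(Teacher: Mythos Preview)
Your argument is correct and takes a genuinely different route from the paper's.  The paper proceeds by induction on $m$: using the identity
\[
S(\sigma_{n,k})=\sum_{l=0}^{m}\binom{m}{l}\,S\!\left(\sum_{i=0}^{l}\binom{l}{i}\sigma_{n-m,k-i}\right),
\]
it solves for the $l=m$ term and writes the target sequence as $S(\sigma_{n+m,k})$ minus an integer combination of the sequences with smaller $l$, all of which satisfy the recurrence by induction.  You instead collapse the exponent via Vandermonde to $\binom{l+m}{k}$, observe the $2^r$-periodicity in $l$, and verify (\ref{mainrec}) directly through the forward-difference identity $\Delta^{2^r}b_n=b_n$.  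This is more explicit and in fact gives more: as you note, the Fourier/roots-of-unity version shows the sequence even obeys the \emph{minimal} recurrence of Theorem~\ref{charpolyn}, since shifting $l\mapsto l+m$ only rotates each Fourier coefficient and hence preserves its support.  It is worth remarking that the Vandermonde collapse you use is exactly the identity $\sum_{i}\binom{m}{i}\binom{l}{k-i}=\binom{l+m}{k}$ that the paper later invokes in the proof of Theorem~\ref{samepert}, so your shortcut is entirely in the spirit of the article.

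One small wording issue: you say $(X-1)^{2^r}-1$ ``is exactly the characteristic polynomial of (\ref{mainrec})'', but (\ref{mainrec}) has order $2^r-1$, so its characteristic polynomial has degree $2^r-1$.  What actually happens is $(X-1)^{2^r}-1=X\cdot p(X)$ with $p$ the characteristic polynomial of (\ref{mainrec}); equivalently, the operator identity $(E-I)^{2^r}b_n=b_n$ is, after cancelling the $i=0$ term and reindexing, literally the relation $b_{N}=\sum_{l=1}^{2^r-1}(-1)^{l-1}\binom{2^r}{l}b_{N-l}$.  So the conclusion stands, but you should rephrase that sentence (e.g., ``which, after cancelling the common $b_n$, is precisely recurrence~(\ref{mainrec})'').
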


\begin{proof}
 The proof is by induction on $m$.  Start with the identity
 \begin{equation}
  S(\sigma_{n,k})=S(\sigma_{n-1,k})+S(\sigma_{n-1,k}+\sigma_{n-1,k-1}).
 \end{equation}
This implies that 
\begin{equation}
 S(\sigma_{n,k}+\sigma_{n,k-1}) = S(\sigma_{n+1,k})-S(\sigma_{n,k}),
\end{equation}
and so it is clear that $\{S(\sigma_{n,k}+\sigma_{n,k-1})\}$ satisfies the same recurrence as $\{S(\sigma_{n,k})\}$.  Thus, the claim holds for $m=1$.  

Suppose now that the statement is true for all 
values of $m_1$ less than some $m>1$.  The identity
\begin{eqnarray}
  S(\sigma_{n,k})&=&\sum_{l=0}^m \binom{m}{l}S\left(\sum_{i=0}^l\binom{l}{i}\sigma_{n-m,k-i}\right)\\ \nonumber
  &=&\sum_{l=0}^{m-1} \binom{m}{l}S\left(\sum_{i=0}^l\binom{l}{i}\sigma_{n-m,k-i}\right)+S\left(\sum_{i=0}^m\binom{m}{i}\sigma_{n-m,k-i}\right),
\end{eqnarray}
implies
\begin{equation}
\label{indcsum}
S\left(\sum_{i=0}^m\binom{m}{i}\sigma_{n,k-i}\right)=S(\sigma_{n+m,k})-\sum_{l=0}^{m-1} \binom{m}{i}S\left(\sum_{i=0}^l\binom{l}{i}\sigma_{n,k-i}\right).
\end{equation}
By induction, each term of the sum on the right hand side of (\ref{indcsum}) satisfies the same recurrence as $\{S(\sigma_{n,k})\}$, therefore the claim is also true for $m$.  This concludes the proof.
\end{proof}

The next step is to show that the sequences $\{S(\sigma_{n,k})\}$ and $\{S(\sigma_{n,k}+F({\bf X}))\}$ satisfy the same linear recurrence with integer coefficients.

\begin{theorem}
\label{pertrec}
Let $k>1$ and $j$ be fixed integers and let $F({\bf X})$ be a binary polynomial in the variables $X_1,\cdots,X_j$. Suppose that $\bar{k}=2^{a_s}+\cdots+2^{a_1}+1$.  The sequence
\begin{equation}
\label{pertur}
\{S(\sigma_{n,k}+F({\bf X}))\}_{n\in \mathbb{N}} 
\end{equation}
satisfies the homogeneous linear recurrence whose characteristic polynomial is
\begin{equation}
f(X)=(X-2)^{\epsilon(k)}\prod_{l=1}^s \Phi_{2^{a_l+1}}(X-1).
\end{equation}
Moreover, if $F({\bf X})$ is balanced then (\ref{pertur}) satisfies the homogeneous linear recurrence with characteristic polynomial
\begin{equation}
\prod_{l=1}^s \Phi_{2^{a_l+1}}(X-1),
\end{equation}
which is of degree $\bar{k}-1=2^{a_s}+\cdots+2^{a_1}$.
\end{theorem}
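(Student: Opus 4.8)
The plan is to prove the two assertions separately: the first is essentially formal, while the second rests on an asymptotic computation. For the first assertion, specialize (\ref{perturbationeq}) to the single elementary symmetric polynomial $\sigma_{n,k}$:
\[
S(\sigma_{n,k}+F({\bf X}))=\sum_{m=0}^j C_m(F)\,S\!\left(\sum_{i=0}^m\binom{m}{i}\sigma_{n-j,k-i}\right),
\]
a linear combination with constant integer coefficients $C_m(F)$. By the proof of Lemma \ref{reclemma}, each sequence $\{S(\sum_{i=0}^m\binom{m}{i}\sigma_{N,k-i})\}_N$ is a $\mathbb{Z}$-linear combination of shifts of $\{S(\sigma_{N,k})\}_N$, hence is annihilated by the recurrence operator with characteristic polynomial $p_k(X)=(X-2)^{\epsilon(k)}\prod_{l=1}^s\Phi_{2^{a_l+1}}(X-1)=f(X)$ of Theorem \ref{charpolyn}. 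Reindexing $N=n-j$ does not change which recurrences a sequence obeys, and the sequences annihilated by a fixed linear recurrence operator form a vector space; therefore $\{S(\sigma_{n,k}+F({\bf X}))\}_n$ is annihilated by $f(E)$, where $E$ is the shift operator. This is the first claim.

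For the ``moreover'' part, write $f(X)=(X-2)^{\epsilon(k)}g(X)$ with $g(X)=\prod_{l=1}^s\Phi_{2^{a_l+1}}(X-1)$ (a nonempty product, since $k>1$ forces $\bar k\ge 3$ and hence each $a_l\ge 1$). The roots of $g$ are the numbers $1+\zeta$ with $\zeta$ a primitive $2^{a_l+1}$-th root of unity; they are pairwise distinct, none equal to $2$, and all of modulus $|1+\zeta|=\sqrt{2+2\,\mathrm{Re}\,\zeta}<2$. Consequently the solution space of $f(E)=0$ is the (possibly absent) line $\{c\,2^n\}$ together with the solution space of $g(E)=0$, and every $g$-solution is $o(2^n)$. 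Hence, when $\epsilon(k)=1$ (the case $\epsilon(k)=0$ being immediate, as then $f=g$), a sequence annihilated by $f(E)$ is annihilated by $g(E)$ if and only if its coefficient of $2^n$ vanishes, i.e. if and only if $\lim_{n\to\infty}2^{-n}S(\sigma_{n,k}+F({\bf X}))=0$. So it suffices to show this limit vanishes when $F$ is balanced.

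To evaluate the limit, reduce the coefficients $\binom{m}{i}$ modulo $2$ and apply Vandermonde's identity, $\sum_{i=0}^m\binom{m}{i}\binom{l}{k-i}\equiv\binom{l+m}{k}\pmod 2$ (consistent with $\binom{l}{k-i}=0$ for $k-i<0$ and $\binom{l}{0}=1$, as in the remark after (\ref{perturbationeq})); this turns $\sum_{i=0}^m\binom{m}{i}\sigma_{N,k-i}$ into a symmetric Boolean function to which the asymptotic formula (\ref{asymplimit})--(\ref{theconstantscj}) applies, giving
\[
\lim_{N\to\infty}\frac{1}{2^N}S\!\left(\sum_{i=0}^m\binom{m}{i}\sigma_{N,k-i}\right)=\frac{1}{2^r}\sum_{l=0}^{2^r-1}(-1)^{\binom{l+m}{k}}=\frac{1}{2^r}\sum_{l=0}^{2^r-1}(-1)^{\binom{l}{k}}=c_0(k),
\]
where $r=\lfloor\log_2 k\rfloor+1$ and the middle equality is the shift-invariance of a sum over a full period $2^r$ of the $2^r$-periodic function $l\mapsto(-1)^{\binom{l}{k}}$. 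Feeding this and $\sum_{m=0}^j C_m(F)=S(F)$ back into the specialization of (\ref{perturbationeq}) above gives $\lim_{n\to\infty}2^{-n}S(\sigma_{n,k}+F({\bf X}))=2^{-j}S(F)\,c_0(k)$ (essentially the asymptotic computation of \cite{cm2}), which is $0$ when $F$ is balanced. Finally $\deg g=\sum_{l=1}^s\varphi(2^{a_l+1})=\sum_{l=1}^s 2^{a_l}=\bar k-1$, which is the claimed degree.

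I expect the main obstacle to be the ``moreover'' part — specifically, pinning down that $\lim 2^{-n}S(\sigma_{n,k}+F({\bf X}))$ is exactly the coefficient of $2^n$ in the closed form of the sequence (this is what licenses deleting the factor $(X-2)$ when $F$ is balanced), together with the care needed to push the Vandermonde-mod-$2$ reduction through the degenerate terms $k-i\le 0$. The first assertion, by contrast, is formal once Lemma \ref{reclemma} and Theorem \ref{charpolyn} are in hand.
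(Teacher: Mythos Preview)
Your argument is correct and follows the same route as the paper: the first assertion is obtained from equation (\ref{perturbationeq}) together with Lemma \ref{reclemma} (linear combinations of shifts of sequences satisfying a recurrence again satisfy it), and the ``moreover'' part is proved by showing that the coefficient $d_0$ of $2^n$ in the closed form equals $2^{-j}S(F)\,c_0(k)$, hence vanishes when $F$ is balanced. The only difference is cosmetic: the paper simply cites \cite{cm2} for the value of $d_0$, whereas you rederive it via the Vandermonde identity $\sum_i\binom{m}{i}\binom{l}{k-i}=\binom{l+m}{k}$ and the $2^r$-periodicity of $l\mapsto(-1)^{\binom{l}{k}}$, making your version more self-contained.
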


\begin{proof}
Recall that
\begin{equation}
\label{lincombeq}
S(\sigma_{n,k}+F({\bf X}))=\sum_{m=0}^jC_m(F)S\left(\sum_{i=0}^m\binom{m}{i}\sigma_{n-j,k-i}\right),
\end{equation}
where
\begin{equation}
C_m(F) = \sum_{{\bf x}\in \mathbb{F}_2^j \text{ with }w_2({\bf x})=m} (-1)^{F({\bf x})},
\end{equation}
and $w_2({\bf x})$ is the Hamming weight of ${\bf x}$.  Thus, the first claim is a direct consequence of Lemma \ref{reclemma}.  For the second claim, the fact that $f(X)$ is the characteristic polynomial implies that
\begin{equation}
 S(\sigma_{n,k}+F({\bf X}))=d_0\cdot 2^n +\sum_{\lambda\neq 2, f(\lambda)=0} d_\lambda \cdot \lambda^n
\end{equation}
for some constants $d_0$ and $d_\lambda$'s.  In \cite{cm2}, Castro and Medina showed that 
\begin{equation}
 d_0 = c_0(k)\cdot\frac{S(F)}{2^j},
\end{equation}
where $c_0(k)$ is defined by (\ref{theconstantscj}).  The result follows.
\end{proof}

The above results are sufficient to show that for $k\geq 1$ an integer,  
\begin{equation}
S(\sigma_{n,2k}+X_1) = S(\sigma_{n+1,2k+1}+X_1) 
\end{equation}
for every positive integer $n$. Observe that since $F({\bf X})=X_1$ is a balanced polynomial, then Theorem \ref{pertrec} implies that the sequences $\{S(\sigma_{n,2k}+X_1)\}$ and 
$\{S(\sigma_{n+1,2k+1}+X_1)\}$ satisfy the same linear recurrence of order $2k$.  Therefore, to show that both sequences are equal, it is sufficient to show that their first $2k$ values coincide.

Let
\begin{eqnarray*}
 f(n,k)&=&S(\sigma_{n,2k}+X_1)\\
 &=& S(\sigma_{n-1,2k})-S(\sigma_{n-1,2k}+\sigma_{n-1,2k-1})\\
 &=& \sum_{j=0}^{n-1} (-1)^{\binom{j}{2k}}\left(1-(-1)^{\binom{j}{2k-1}}\right)\binom{n-1}{j}
\end{eqnarray*}
and
\begin{eqnarray*}
 g(n,k)&=&S(\sigma_{n+1,2k+1}+X_1)\\
 &=& S(\sigma_{n,2k+1})-S(\sigma_{n,2k+1}+\sigma_{n,2k})\\
 &=& \sum_{j=0}^{n} (-1)^{\binom{j}{2k+1}}\left(1-(-1)^{\binom{j}{2k}}\right)\binom{n}{j}.
\end{eqnarray*}
Note that
\begin{eqnarray*}
 f(1,k)=&0& = g(1,k),\\
 f(2,k)=&0& = g(2,k),\\
 &\vdots&\\
 f(2k-1,k)=&0& = g(2k-1,k),\\
 f(2k,k)=&2& = g(2k,k).
\end{eqnarray*}
In other words, $\{f(n,k)\}_{n=1}^\infty$ and $\{g(n,k)\}_{n=1}^\infty$ satisfy the same recurrence of order $2k$ with the same initial conditions.  Therefore,
\begin{eqnarray*}
 f(n,k)&=&g(n,k),\\
 S(\sigma_{n,2k}+X_1)&=&S(\sigma_{n+1,2k+1}+X_1)
\end{eqnarray*}
for every $n$ and $k$.  This discussion also implies the following identity of binomial sums.
\begin{corollary}
 Let $k$ and $n$ be positive integers.  Then,
 $$\sum_{l=0}^{n} (-1)^{\binom{l}{2k+1}}\left(1-(-1)^{\binom{l}{2k}}\right)\binom{n}{l}=\sum_{l=0}^{n-1} (-1)^{\binom{l}{2k}}\left(1-(-1)^{\binom{l}{2k-1}}\right)\binom{n-1}{l}.$$
\end{corollary}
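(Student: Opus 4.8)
The plan is to recognize that the two sides of the claimed identity are precisely the closed forms already attached, in the discussion preceding the statement, to the quantities $f(n,k)=S(\sigma_{n,2k}+X_1)$ and $g(n,k)=S(\sigma_{n+1,2k+1}+X_1)$; thus the corollary is nothing more than a restatement of the equality $f(n,k)=g(n,k)$. Concretely, I would first expand each perturbation using the Pascal-type splitting $S(\sigma_{n,m})=S(\sigma_{n-1,m})+S(\sigma_{n-1,m}+\sigma_{n-1,m-1})$ (the $m=1$ instance that already appears in the proof of Lemma \ref{reclemma}) to obtain
\[
S(\sigma_{n,2k}+X_1)=S(\sigma_{n-1,2k})-S(\sigma_{n-1,2k}+\sigma_{n-1,2k-1}),
\]
and the analogous identity for $\sigma_{n+1,2k+1}+X_1$, and then apply formula (\ref{maingen}) to each symmetric exponential sum. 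This produces exactly the binomial expressions appearing on the right-hand and left-hand sides of the corollary, i.e. the left side is $g(n,k)$ and the right side is $f(n,k)$.

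The heart of the matter is then to prove $f(n,k)=g(n,k)$ for all positive integers $n$. Since $F({\bf X})=X_1$ is balanced and since $\overline{2k}=2\lfloor 2k/2\rfloor+1=2k+1=2\lfloor (2k+1)/2\rfloor+1=\overline{2k+1}$, Theorem \ref{pertrec} shows that $\{S(\sigma_{n,2k}+X_1)\}$ and $\{S(\sigma_{n,2k+1}+X_1)\}$ satisfy one and the same homogeneous linear recurrence with integer coefficients, of order $2k$, with characteristic polynomial $\prod_{l=1}^s\Phi_{2^{a_l+1}}(X-1)$; and a shift of index does not change which recurrence a sequence obeys, so $\{S(\sigma_{n+1,2k+1}+X_1)\}$ satisfies it too. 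Two sequences annihilated by a common order-$2k$ recurrence coincide identically once their first $2k$ terms coincide, so it only remains to check the initial conditions.

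Verifying the initial conditions is the single computational step, and it is short. From the binomial forms above, for $1\le n\le 2k-1$ every index $j\le n$ satisfies $\binom{j}{2k}=0$ (and in the $f$-sum $\binom{j}{2k-1}=0$ as well, since $j\le n-1\le 2k-2$), so each summand carries a factor $1-(-1)^0=0$ and hence $f(n,k)=g(n,k)=0$; for $n=2k$ exactly one term survives in each sum — the index $j=2k-1$ in $f(2k,k)$ and the index $j=2k$ in $g(2k,k)$ — giving $f(2k,k)=g(2k,k)=2$. Therefore $f(n,k)=g(n,k)$ for all $n\ge 1$, which is $S(\sigma_{n,2k}+X_1)=S(\sigma_{n+1,2k+1}+X_1)$, and reading off the two binomial expressions gives the corollary. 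I expect the only (minor) subtlety to be the bookkeeping of the degenerate summation ranges when $n$ is small and the observation that reindexing $n\mapsto n+1$ preserves the recurrence.
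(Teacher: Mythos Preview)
Your proposal is correct and follows essentially the same approach as the paper: identify the two sides as $g(n,k)=S(\sigma_{n+1,2k+1}+X_1)$ and $f(n,k)=S(\sigma_{n,2k}+X_1)$ via the splitting $S(\sigma_{n,m}+X_1)=S(\sigma_{n-1,m})-S(\sigma_{n-1,[m,m-1]})$ and formula (\ref{maingen}), invoke Theorem \ref{pertrec} (with $F({\bf X})=X_1$ balanced and $\overline{2k}=\overline{2k+1}$) to place both sequences in the same order-$2k$ recurrence, and then match the $2k$ initial conditions exactly as you do. The paper presents this argument in the paragraphs immediately preceding the corollary and then states the corollary as an immediate consequence.
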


Therefore, even though $\{S(\sigma_{n,2k})\}$ and $\{S(\sigma_{n,2k+1})\}$ are different sequences, they can be altered in such a way to produce an equality (up to a shift in the number of variables), i.e.
\begin{equation}
\label{lingen2k}
 S(\sigma_{n,2k}+X_1)=S(\sigma_{n+1,2k+1}+X_1).
\end{equation}
Equation (\ref{lingen2k}) leads to the following question: For which Boolean polynomials $F({\bf X})$ in $j$ variables ($j$ fixed) does the identity
\begin{equation}
\label{lin2kmoregen}
 S(\sigma_{n,2k}+F({\bf X}))=S(\sigma_{n+1,2k+1}+F{(\bf X}))
\end{equation}
hold?  Remarkably, the answer is for all balanced polynomials $F({\bf X})$.  

We prove this claim next.  However, its proof depends on the following classical result.

\begin{lemma}[Lucas' Theorem]
Let $n$ be a natural number with 2-adic expansion $n = 2^{a_1} + 2^{a_2} + \cdots + 2^{a_l}$. The binomial coefficient $\binom{n}{k}$ is odd if and only if $k$ is
either 0 or a sum of some of the $2^{a_i}$'s
\end{lemma}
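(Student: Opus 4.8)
The plan is to prove this mod-$2$ version of Lucas' theorem by working with generating functions in the polynomial ring $\mathbb{F}_2[X]$, where $\binom{n}{k}\bmod 2$ appears as the coefficient of $X^k$ in $(1+X)^n$. The essential ingredient is the ``freshman's dream'' over $\mathbb{F}_2$: since $\binom{2}{1}=2\equiv 0\pmod 2$, we have $(1+X)^2=1+X^2$ in $\mathbb{F}_2[X]$, and a one-line induction on $a$ then gives $(1+X)^{2^a}=1+X^{2^a}$ in $\mathbb{F}_2[X]$ for every integer $a\geq 0$.

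With the $2$-adic expansion $n=2^{a_1}+2^{a_2}+\cdots+2^{a_l}$ (where $a_1<a_2<\cdots<a_l$) in hand, I would next factor
\[
(1+X)^n=\prod_{i=1}^{l}(1+X)^{2^{a_i}}=\prod_{i=1}^{l}\bigl(1+X^{2^{a_i}}\bigr)\qquad\text{in }\mathbb{F}_2[X],
\]
and expand the right-hand product as $\sum_{T}X^{e(T)}$, where $T$ runs over all subsets of $\{1,\dots,l\}$ and $e(T)=\sum_{i\in T}2^{a_i}$. Because the exponents $a_i$ are pairwise distinct, uniqueness of binary representation makes $T\mapsto e(T)$ injective, so distinct subsets contribute distinct monomials and each coefficient of $\prod_{i}(1+X^{2^{a_i}})$ is either $0$ or $1$.

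Comparing the coefficient of $X^k$ on the two sides then finishes the argument: on the left it is $\binom{n}{k}$ reduced modulo $2$, and on the right it equals $1$ exactly when $k=e(T)$ for some subset $T$ of $\{1,\dots,l\}$ --- that is, when $k=0$ (the empty subset) or $k$ is a sum of some of the $2^{a_i}$'s --- and it equals $0$ in every other case. This is precisely the statement of the lemma.

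The only step that warrants care --- and it is a minor one --- is the claim that expanding $\prod_{i}(1+X^{2^{a_i}})$ in $\mathbb{F}_2[X]$ produces no cancellation of terms; this is exactly the observation that different subsets $T$ yield different values $e(T)$, i.e.\ the uniqueness of binary expansions. As an alternative route one could deduce the lemma from Kummer's theorem on the $2$-adic valuation of binomial coefficients, but the generating-function computation above is shorter and entirely self-contained.
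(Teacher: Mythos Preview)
Your proof is correct: the factorization $(1+X)^n=\prod_i(1+X^{2^{a_i}})$ in $\mathbb{F}_2[X]$ via the freshman's dream, together with uniqueness of binary expansions to rule out cancellation, is a clean and standard argument. The paper itself does not prove this lemma at all --- it simply cites it as a ``classical result'' and moves on --- so there is no paper proof to compare against; your generating-function route is entirely appropriate and self-contained.
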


\begin{theorem}
\label{samepert}
Suppose that $k\geq 1$ is an integer.  Consider a Boolean polynomial $F({\bf X})$ in $j$ (fixed) variables.  Then, $$S(\sigma_{n+j,2k}+F({\bf X})) = S(\sigma_{n+1+j,2k+1}+F({\bf X}))$$ for every positive integer $n$ if and only if $F({\bf X})$ is balanced.
\end{theorem}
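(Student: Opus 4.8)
\emph{Proof proposal.} The plan is to prove the sharper statement that, for every $n\geq 1$,
$$S(\sigma_{n+j,2k}+F({\bf X}))-S(\sigma_{n+1+j,2k+1}+F({\bf X}))=-2^{\,n}\,S(F);$$
the theorem then follows at once, since the right-hand side vanishes for all $n$ (equivalently, for a single $n$) exactly when $S(F)=0$, that is, exactly when $F$ is balanced. (Taking $F({\bf X})=X_1$ recovers a shift of identity (\ref{lingen2k}).) The first ingredient is a closed form for the inner exponential sums in (\ref{perturbationeq}). Reading the coefficients of $\sum_{i=0}^{m}\binom{m}{i}\sigma_{n,p-i}$ modulo $2$ and evaluating at a point of Hamming weight $l$, Vandermonde's identity gives $\sum_{i\geq 0}\binom{m}{i}\binom{l}{p-i}=\binom{m+l}{p}$ — and the conventions of the remark following (\ref{perturbationeq}) (drop terms with negative lower index, read $\sigma_{n,0}$ as $1$) are exactly what make this bookkeeping correct. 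Hence, writing $T_m(n,p):=S\!\left(\sum_{i=0}^m\binom{m}{i}\sigma_{n,p-i}\right)$, we have $T_m(n,p)=\sum_{l=0}^n(-1)^{\binom{m+l}{p}}\binom{n}{l}$.

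The heart of the argument is the claim $T_m(n,2k)-T_m(n+1,2k+1)=-2^{\,n}$, valid for all $m\geq 0$ and $n\geq 1$, with the value independent of $m$. Expanding $\binom{n+1}{l}=\binom{n}{l}+\binom{n}{l-1}$ and re-indexing,
$$T_m(n+1,2k+1)=\sum_{l=0}^n\left((-1)^{\binom{m+l}{2k+1}}+(-1)^{\binom{m+l+1}{2k+1}}\right)\binom{n}{l},$$
and then Pascal's rule $\binom{m+l+1}{2k+1}\equiv\binom{m+l}{2k+1}+\binom{m+l}{2k}\pmod{2}$ rewrites this as $\sum_{l=0}^n(-1)^{\binom{m+l}{2k+1}}\left(1+(-1)^{\binom{m+l}{2k}}\right)\binom{n}{l}$. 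Now Lucas' Theorem enters: because $2k$ is even, the $2$-adic digits of $2k$ form a subset of those of $2k+1$, so whenever $\binom{N}{2k+1}$ is odd the coefficient $\binom{N}{2k}$ is odd as well. Consequently the $l$-th summand dies whenever $\binom{m+l}{2k}$ is odd, and on the complementary set $\binom{m+l}{2k+1}$ is forced to be even, so $T_m(n+1,2k+1)=2\sum_{l:\,\binom{m+l}{2k}\text{ even}}\binom{n}{l}$. Splitting $\sum_l\binom{n}{l}=2^n$ by the parity of $\binom{m+l}{2k}$ likewise gives $T_m(n,2k)=2\sum_{l:\,\binom{m+l}{2k}\text{ even}}\binom{n}{l}-2^{\,n}$, and subtracting proves the claim.

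Finally I would assemble the pieces through (\ref{perturbationeq}). Specializing that equation to a single degree and shifting the number of variables, $S(\sigma_{n+j,2k}+F({\bf X}))=\sum_{m=0}^jC_m(F)\,T_m(n,2k)$ and $S(\sigma_{n+1+j,2k+1}+F({\bf X}))=\sum_{m=0}^jC_m(F)\,T_m(n+1,2k+1)$; subtracting and using the previous step yields $-2^{\,n}\sum_{m=0}^jC_m(F)$, which is $-2^{\,n}S(F)$ since $\sum_{m=0}^jC_m(F)=\sum_{{\bf x}\in\mathbb{F}_2^{\,j}}(-1)^{F({\bf x})}=S(F)$. This gives the displayed identity and hence the theorem. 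The one step requiring genuine care — as opposed to routine manipulation — is the Lucas' Theorem input, which forces the index set $\{l:\binom{m+l}{2k}\text{ even},\ \binom{m+l}{2k+1}\text{ odd}\}$ to be empty and thereby removes the dependence on $m$; with that in place, the Vandermonde reduction and the reassembly via (\ref{perturbationeq}) are purely mechanical.
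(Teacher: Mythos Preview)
Your argument is correct, and it takes a genuinely different route from the paper's. The paper splits the two implications: for the ``only if'' direction it invokes the recurrence machinery of Theorem~\ref{pertrec}, observing that the two sequences satisfy the same linear recurrence but have distinct initial conditions ($S(F)$ versus $2S(F)$) when $S(F)\neq 0$; for the ``if'' direction it first uses balancedness to eliminate $C_0=-C_1-\cdots-C_j$ from both expressions and then appeals to the identity
\[
(-1)^{\binom{l+m}{2k}}-(-1)^{\binom{l}{2k}} = (-1)^{\binom{l+m}{2k+1}}+(-1)^{\binom{l+1+m}{2k+1}}-(-1)^{\binom{l}{2k+1}}-(-1)^{\binom{l+1}{2k+1}},
\]
which it attributes to Lucas' Theorem. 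Your approach is more economical: by computing the difference $T_m(n,2k)-T_m(n+1,2k+1)$ \emph{before} summing against the $C_m(F)$, you obtain the exact formula $S(\sigma_{n+j,2k}+F)-S(\sigma_{n+1+j,2k+1}+F)=-2^{\,n}S(F)$, which settles both directions at once and bypasses the recurrence theory entirely. Your Lucas input is also simpler --- just the contrapositive ``$\binom{N}{2k}$ even $\Rightarrow$ $\binom{N}{2k+1}$ even'', which follows immediately since the binary digits of $2k$ sit inside those of $2k+1$. The payoff is a sharper quantitative statement than the theorem asserts. The paper's approach, on the other hand, produces the four-term identity above as a byproduct, which it then iterates (via Vandermonde again) to reach the more general Theorem~\ref{samepertgen}; your method would extend there as well, but the paper's route makes that generalization more visibly mechanical.
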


\begin{proof}
The necessary part of the statement is not hard to establish.   To see this, suppose that $F({\bf X})$ is not balanced. Recall that 
\begin{eqnarray}
\label{expsumasbinom}
S(\sigma_{n+j,2k}+F({\bf X}))&=& \sum_{m=0}^jC_m(F)S\left(\sum_{i=0}^m\binom{m}{i}\sigma_{n,2k-i}\right)\\\nonumber
&=& \sum_{l=0}^n\left(\sum_{m=0}^j C_m(F)(-1)^{\sum_{i=0}^m \binom{m}{i}\binom{l}{2k-i}}\right)\binom{n}{l}.\\\nonumber
S(\sigma_{n+1+j,2k+1}+F({\bf X}))&=& \sum_{m=0}^jC_m(F)S\left(\sum_{i=0}^m\binom{m}{i}\sigma_{n+1,2k+1-i}\right)\\\nonumber
&=&\sum_{l=0}^{n+1}\left(\sum_{m=0}^j C_m(F)(-1)^{\sum_{i=0}^m \binom{m}{i}\binom{l}{2k-i}}\right)\binom{n+1}{l}.
\end{eqnarray}
Theorem \ref{pertrec} implies that $\{S(\sigma_{n+j,2k}+F({\bf X}))\}$ and $\{S(\sigma_{n+1+j,2k+1}+F({\bf X}))\}$ satisfy the same linear recurrence of order $2k+1$.  It is not hard to see 
that the first initial condition of $\{S(\sigma_{n,2k}+F({\bf X}))\}$ is $S(F)$, while the first initial condition of $\{S(\sigma_{n+1,2k+1}+F({\bf X}))\}$ is $2S(F)$.  Since $S(F)\neq 0$, then 
$\{S(\sigma_{n+j,2k}+F({\bf X}))\}$ and $\{S(\sigma_{n+1+j,2k+1}+F({\bf X}))\}$ are different sequences.

The sufficient part can also be argued from the point of view of recurrences, but the proof becomes cumbersome as soon as $j\geq 5$.  Therefore, we must find an alternative proof.   Suppose that $F({\bf X})$ is balanced.
For simplicity, let $C_m=C_m(F)$ and remember that $S(F)=C_0+C_1+\cdots+C_j$.   The balancedness of $F({\bf X})$ implies that
\begin{equation}
C_0+C_1+\cdots+C_j=0.
\end{equation}
We now simplify the formulas in (\ref{expsumasbinom}).  For this, iterate the recursive formula
\begin{equation}
\label{recur}
\binom{n}{l}=\binom{n-1}{l}+\binom{n-1}{l-1}
\end{equation}
to obtain the identity
\begin{equation}
\sum_{i=0}^m\binom{m}{i}\binom{l}{k-i}=\binom{l+m}{k}.
\end{equation}
Thus,
\begin{eqnarray}
\label{expsumasbinomsimp}
S(\sigma_{n+j,2k}+F({\bf X}))&=& \sum_{l=0}^n\left(\sum_{m=0}^j C_m(-1)^{\binom{l+m}{2k}}\right)\binom{n}{l},\\\nonumber
S(\sigma_{n+1+j,2k+1}+F({\bf X}))&=& \sum_{l=0}^n\left(\sum_{m=0}^j C_m(-1)^{\binom{l+1+m}{2k+1}}\right)\binom{n+1}{l}.
\end{eqnarray}
Observe that $S(\sigma_{n+1+j,2k+1}+F({\bf X}))$ can be re-written as
\begin{eqnarray*}
S(\sigma_{n+1+j,2k+1}+F({\bf X}))&=& \sum_{l=0}^n\left(\sum_{m=0}^j C_m(-1)^{\binom{l+m}{2k+1}}\right)\left[\binom{n}{l}+\binom{n}{l-1}\right]\\
&=& \sum_{l=0}^n\left(\sum_{m=0}^j C_m\left[(-1)^{\binom{l+m}{2k+1}}+(-1)^{\binom{l+1+m}{2k+1}}\right]\right)\binom{n}{l}.
\end{eqnarray*}

The next step is to use the assumption of balancedness of $F({\bf X})$.  Since 
\begin{equation}
C_0=-C_1-C_2-\cdots-C_j,
\end{equation}
then
\begin{eqnarray}
S(\sigma_{n+j,2k}+F({\bf X}))&=& \sum_{m=0}^j C_m\sum_{l=0}^n(-1)^{\binom{l+m}{2k}}\binom{n}{l}\\\nonumber
&=& \sum_{m=1}^j C_m\sum_{l=0}^n\left[(-1)^{\binom{l+m}{2k}}-(-1)^{\binom{l}{2k}}\right]\binom{n}{l}\\\nonumber
\end{eqnarray}
and
\begin{eqnarray}\nonumber
S(\sigma_{n+1+j,2k+1}+F({\bf X}))&=& \sum_{m=0}^j C_m\sum_{l=0}^n\left[(-1)^{\binom{l+m}{2k+1}}+(-1)^{\binom{l+1+m}{2k+1}}\right]\binom{n}{l}\\
&=& \sum_{m=1}^j C_m\sum_{l=0}^n\left[(-1)^{\binom{l+m}{2k+1}}+(-1)^{\binom{l+1+m}{2k+1}}-(-1)^{\binom{l}{2k+1}}-(-1)^{\binom{l+1}{2k+1}}\right]\binom{n}{l}.
\end{eqnarray}
One consequence of Lucas' Theorem is
\begin{equation}
(-1)^{\binom{l+m}{2k}}-(-1)^{\binom{l}{2k}} = (-1)^{\binom{l+m}{2k+1}}+(-1)^{\binom{l+1+m}{2k+1}}-(-1)^{\binom{l}{2k+1}}-(-1)^{\binom{l+1}{2k+1}}
\end{equation}
for all positive integers $k$ and all non negative integers $l$ and $m$.  This concludes the proof.
\end{proof}

\begin{example}
Consider the rotation $$R({\bf X})=X_1X_2+X_2X_3+X_3X_4+X_4X_5+X_5X_1.$$ Observe that $R({\bf X})$ is balanced.  Theorem \ref{samepert}  implies that $S(\sigma_{n,2k}+R({\bf X})) = S(\sigma_{n+1,2k+1}+
R({\bf X}))$ for every positive integer $n$ and $k$.  Indeed, let $2k=10$, then the first few values of the sequence $\{S(\sigma_{n,10}+R({\bf X}))\}$ (starting from $n=10$) are
$$2, 24, 136, 528, 1612, 4144, 9336, 18928, 35220, 61104, 100064,\cdots$$
while the first few values of $\{S(\sigma_{n,11}+R({\bf X}))\}$ (starting from $n=10$) are
$$0, 2, 24, 136, 528, 1612, 4144, 9336, 18928, 35220, 61104,\cdots.$$
\end{example}

\begin{example}
Consider the Boolean polynomial 
\begin{eqnarray*}
F({\bf X})&=&X_1 X_5+X_4 X_5+X_9 X_5+X_{12} X_5+X_3 X_6+X_2 X_7+X_1 X_8+X_1 X_9\\
&&+X_4 X_9+X_8 X_9+X_3 X_{10}+X_7 X_{10}+X_2 X_{11}+X_6 X_{11}+X_1 X_{12}. 
\end{eqnarray*}
Theorem \ref{pertrec} implies that the sequences $\{S(\sigma_{n,2}+F({\bf X}))\}$ and $\{S(\sigma_{n+1,3}+F({\bf X}))\}$ satisfy the same linear recurrence of order 3.   Now observe that
\begin{eqnarray}
S(\sigma_{13,2}+F({\bf X}))&=&S(\sigma_{14,3}+F({\bf X}))=0\\\nonumber
S(\sigma_{14,2}+F({\bf X}))&=&S(\sigma_{15,3}+F({\bf X}))=-256\\\nonumber
S(\sigma_{15,2}+F({\bf X}))&=&S(\sigma_{16,3}+F({\bf X}))=-512.
\end{eqnarray}
Thus, $\{S(\sigma_{n,2}+F({\bf X}))\}=\{S(\sigma_{n+1,3}+F({\bf X}))\}$ holds for every $n$. In view of Theorem \ref{samepert}, it must be that $F({\bf X})$ is balanced.  Indeed, $S(F)=0$.
\end{example}

It is often the case in mathematics that results can be generalized by closer inspection of their proofs.  This is the case for Theorem \ref{samepert}. A crucial element of its proof is the identity
\begin{equation}
\label{neg1powers}
(-1)^{\binom{l+m}{2k}}-(-1)^{\binom{l}{2k}} = (-1)^{\binom{l+m}{2k+1}}+(-1)^{\binom{l+1+m}{2k+1}}-(-1)^{\binom{l}{2k+1}}-(-1)^{\binom{l+1}{2k+1}},
\end{equation}
which is true for all positive integers $k$ and all non negative integers $l$ and $m$.  Now, iteration of (\ref{recur}) leads to the equation
\begin{equation}
 \binom{l+m}{2k}=\sum_{i=0}^t \binom{t}{i}\binom{l+m-t}{2k-i}
\end{equation}
and other similar ones for 
$$\binom{l}{2k},\,\,\, \binom{l+m}{2k+1},\,\,\, \binom{l+m+1}{2k+1},\,\,\, \binom{l}{2k+1}, \text{ and } \binom{l+1}{2k+1}.$$
This and equation (\ref{neg1powers}) (shifted by $t$) imply that 
\begin{eqnarray}\nonumber
\label{neg1powersgen}
(-1)^{\sum_{i=0}^t \binom{t}{i}\binom{l+m}{2k-i}}-(-1)^{\sum_{i=0}^t \binom{t}{i}\binom{l}{2k-i}}&=&(-1)^{\sum_{i=0}^t \binom{t}{i}\binom{l+m}{2k+1-i}}+(-1)^{\sum_{i=0}^t \binom{t}{i}\binom{l+m+1}{2k+1-i}}\\
&& -(-1)^{\sum_{i=0}^t \binom{t}{i}\binom{l}{2k+1-i}}-(-1)^{\sum_{i=0}^t \binom{t}{i}\binom{l+1}{2k+1-i}}
\end{eqnarray}
is true for all positive integers $k$ and all non negative integers $l$ and $m$.

Suppose that $F({\bf X})$ is a balanced Boolean polynomial in the variables $X_1,\cdots, X_j$ with $j$ fixed.  Following the proof of Theorem \ref{samepert} and using equation (\ref{neg1powersgen}) lead to 
the equation
\begin{equation}
 S\left(\left[\sum_{i=0}^t\binom{t}{i}\sigma_{n+j,2k-i}\right]+F({\bf X})\right) = S\left(\left[\sum_{i=0}^t\binom{t}{i}\sigma_{n+1+j,2k+1-i}\right]+F({\bf X})\right),
\end{equation}
which is true for all positive integers $k$ and all non-negative integers $t$.  This leads to the following result.
\begin{theorem}
\label{samepertgen}
 Suppose that $k$ and $t$ are integers with $k$ positive and $t$ non-negative. Consider a Boolean polynomial $F({\bf X})$ in $j$ (fixed) variables.  Then, 
 $$ S\left(\left[\sum_{i=0}^t\binom{t}{i}\sigma_{n+j,2k-i}\right]+F({\bf X})\right) = S\left(\left[\sum_{i=0}^t\binom{t}{i}\sigma_{n+1+j,2k+1-i}\right]+F({\bf X})\right)$$ 
 for every positive integer $n$ if and only if $F({\bf X})$ is balanced.
\end{theorem}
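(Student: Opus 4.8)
\medskip
\noindent\emph{Plan.} I would prove the two implications separately, with different tools: necessity via the recurrence results (Lemma~\ref{reclemma} and Theorem~\ref{pertrec}) plus a comparison of initial data, and sufficiency by rewriting both exponential sums as single sums against $\binom{n}{l}$ and reducing the claimed equality, coefficient by coefficient, to the pointwise $(\pm1)$-identity~(\ref{neg1powersgen}) --- which, as observed just before the theorem, is nothing but~(\ref{neg1powers}) evaluated at the shifted argument $l+t$, and hence a consequence of Lucas' Theorem. Throughout I write $C_m=C_m(F)$, so that $S(F)=C_0+C_1+\cdots+C_j$, and use the convention of the Remark after~(\ref{maingen}) for exponential sums in few variables.

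\medskip
\noindent\emph{Necessity.} Suppose the displayed identity holds for every positive integer $n$. The symmetric Boolean functions $\sum_{i=0}^{t}\binom{t}{i}\sigma_{n,2k-i}$ and $\sum_{i=0}^{t}\binom{t}{i}\sigma_{n,2k+1-i}$ have top degrees $2k$ and $2k+1$, and both of these give $\bar{k}=2k+1$; so, by Lemma~\ref{reclemma} together with the perturbation formula~(\ref{perturbationeq}) and exactly as in the proof of Theorem~\ref{pertrec}, both sides of the identity --- regarded as sequences in $n$ --- satisfy the common homogeneous linear recurrence with characteristic polynomial $(X-2)\prod_{l=1}^{s}\Phi_{2^{a_l+1}}(X-1)$, where $2k+1=1+2^{a_1}+\cdots+2^{a_s}$; this recurrence has order $2k+1$ and is reversible. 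It then suffices to find one index at which the two sequences disagree when $S(F)\neq 0$. If $2k>j+t+2$ one may take a positive integer $n$ with $n+1+j+t<2k$: every binomial coefficient occurring in the closed forms below is then $0$, the inner sums collapse, and the left side equals $S(F)\,2^{n}$ while the right side equals $S(F)\,2^{n+1}$, forcing $S(F)=0$. For the remaining (finitely many, since $j$ and $t$ are fixed) small values of $k$ one adapts the necessity argument of Theorem~\ref{samepert}: the first legitimate value of the left sequence is $S(F)$, that of the right is $2S(F)$ --- the factor $2$ coming from the shift $n\mapsto n+1$ in the number of variables --- or, equivalently, one compares the coefficients of $2^{n}$ in the closed forms provided by Theorem~\ref{pertrec}.

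\medskip
\noindent\emph{Sufficiency.} Now assume $F$ is balanced, i.e.\ $\sum_{m=0}^{j}C_m=0$; here no recurrence input is needed. I would first apply~(\ref{perturbationeq}) and then collapse the resulting double combination of elementary symmetric polynomials by two applications of the identity $\sum_{i}\binom{a}{i}\binom{l}{K-i}=\binom{l+a}{K}$ (iteration of~(\ref{recur})); this yields
$$S\left(\left[\sum_{i=0}^{t}\binom{t}{i}\sigma_{n+j,2k-i}\right]+F\right)=\sum_{l=0}^{n}\left(\sum_{m=0}^{j}C_m(-1)^{\binom{l+m+t}{2k}}\right)\binom{n}{l}$$
and, in the same way,
$$S\left(\left[\sum_{i=0}^{t}\binom{t}{i}\sigma_{n+1+j,2k+1-i}\right]+F\right)=\sum_{l=0}^{n+1}\left(\sum_{m=0}^{j}C_m(-1)^{\binom{l+m+t}{2k+1}}\right)\binom{n+1}{l}.$$
In the second identity I would substitute $\binom{n+1}{l}=\binom{n}{l}+\binom{n}{l-1}$ and re-index, turning its right-hand side into a sum over $0\le l\le n$ against $\binom{n}{l}$ with inner coefficient $\sum_{m=0}^{j}C_m\left[(-1)^{\binom{l+m+t}{2k+1}}+(-1)^{\binom{l+1+m+t}{2k+1}}\right]$. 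Then I would use balancedness in the form $C_0=-(C_1+\cdots+C_j)$ to absorb the $m=0$ term, so that the coefficient of $\binom{n}{l}$ on the left becomes $\sum_{m=1}^{j}C_m\left[(-1)^{\binom{l+m+t}{2k}}-(-1)^{\binom{l+t}{2k}}\right]$ and the one on the right becomes $\sum_{m=1}^{j}C_m\left[(-1)^{\binom{l+m+t}{2k+1}}+(-1)^{\binom{l+1+m+t}{2k+1}}-(-1)^{\binom{l+t}{2k+1}}-(-1)^{\binom{l+1+t}{2k+1}}\right]$. Finally, identity~(\ref{neg1powersgen}), applied bracket by bracket (it is precisely~(\ref{neg1powers}) at the point $l+t$), shows that these two coefficients agree for every $l$ and $m$; hence the two binomial sums coincide term by term, which is exactly the asserted equality.

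\medskip
\noindent\emph{Main obstacle.} I do not expect a genuinely new difficulty beyond Theorem~\ref{samepert}: the key point is to recognize that the combinatorial ingredient~(\ref{neg1powersgen}) is merely the $l\mapsto l+t$ translate of the already-established~(\ref{neg1powers}), so the extra parameter $t$ costs nothing at the pointwise level. What will require care is the bookkeeping --- carrying out the two nested binomial convolutions correctly --- together with, in the necessity direction, the small values of $k$ for which no positive ``collapse'' index exists, which have to be treated separately via the closed form of Theorem~\ref{pertrec}.
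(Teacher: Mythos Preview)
Your proposal is correct and follows essentially the same route as the paper: sufficiency by rewriting both sides as sums against $\binom{n}{l}$ and invoking the shifted Lucas identity~(\ref{neg1powersgen}) coefficient by coefficient, and necessity via the extended recurrence of Lemma~\ref{reclemma}/Theorem~\ref{pertrec} together with a comparison of initial data---the paper's own proof simply says ``the necessary part follows as in Theorem~\ref{samepert}'' and ``the sufficient part follows from the discussion above.'' The one place to tighten is your small-$k$ branch of necessity: the claim that ``the first legitimate value of the left sequence is $S(F)$'' is not literally true once $2k\le j+t+1$, so there you really must fall back on your alternative of comparing the $2^{n}$-coefficients, which requires checking that $c_0(2k)\neq 2\,c_0(2k+1)$ (in fact $c_0(2k)-2c_0(2k+1)=-1$, an easy Lucas computation).
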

\begin{proof}
Lemma \ref{reclemma} and Theorem \ref{pertrec} can be easily extended to perturbations of the form $\sigma_{n,[k_1,\cdots,k_s]}+F({\bf X})$.  Once this is done, the necessary part of the statement follows as in 
Theorem \ref{samepert}.  The sufficient part follows from the discussion above.
\end{proof}

\begin{example}
 Suppose that $F({\bf X})$ is a balanced Boolean polynomial in the variables $X_1,\cdots, X_j$ with $j$ fixed.
 Theorem \ref{samepertgen} implies that the following equations
\begin{align*}
  S(\sigma_{n+j,6}+F({\bf X}))&=S(\sigma_{n+j+1,7}+F({\bf X}))\\
  S(\sigma_{n+j,[6,5]}+F({\bf X}))&=S(\sigma_{n+j+1,[7,6]}+F({\bf X}))\\
  S(\sigma_{n+j,[6,4]}+F({\bf X}))&=S(\sigma_{n+j+1,[7,5]}+F({\bf X}))\\
  S(\sigma_{n+j,[6,5,4,3]}+F({\bf X}))&=S(\sigma_{n+j+1,[7,6,5,4]}+F({\bf X}))\\
  S(\sigma_{n+j,[6,2]}+F({\bf X}))&=S(\sigma_{n+j+1,[7,3]}+F({\bf X}))\\
  S(\sigma_{n+j,[6,5,2,1]}+F({\bf X}))&=S(\sigma_{n+j+1,[7,6,3,2]}+F({\bf X}))\\
  S(\sigma_{n+j,[6,4,2,0]}+F({\bf X}))&=S(\sigma_{n+j+1,[7,5,3,1]}+F({\bf X}))\\
  S(\sigma_{n+j,[6,5,4,3,2,1]}+F({\bf X}))&=S(\sigma_{n+j+1,[7,6,5,4,3,2,1]}+F({\bf X})),
\end{align*}
are true for every natural number $n$.
\end{example}

\section{Diophantine equations with binomial coefficients}
\label{diophsec}
In this section we are interested in Diophantine equations of the form
\begin{equation}
\label{gensec}
 \sum_{l=0}^n \binom{n}{l} x_l=0,
\end{equation}
where $x_l$ belongs to some fixed bounded subset $\Gamma$ of $\mathbb{Z}$.   Exponential sums of symmetric Boolean functions are naturally connected to this problem.

Recall that exponential sums of symmetric Boolean functions can be expressed in terms of binomial sums.  To be specific, if $1\leq k_1<\cdots<k_s$ are integers, then
\begin{equation}
S(\sigma_{n,[k_1,\cdots,k_s]}) = \sum_{l=0}^n (-1)^{\binom{l}{k_1}+\cdots+\binom{l}{k_s}}\binom{n}{l}.
\end{equation}
Therefore, every time we find a balanced symmetric Boolean function, we also find a solution to (\ref{gensec}) where the $x_l$'s belong to the set $\Gamma=\{-1,1\}$.  The converse is also true, that is, if
we find a solution to  (\ref{gensec}) over $\Gamma=\{-1,1\}$, then we also find a balanced symmetric Boolean function.  For example, consider the equation
\begin{equation}
\label{sporadicfirstex}
 \binom{8}{0}-\binom{8}{1}-\binom{8}{2}-\binom{8}{3}+\binom{8}{4}+\binom{8}{5}-\binom{8}{6}-\binom{8}{7}+\binom{8}{8}=0.
\end{equation}
The corresponding balanced symmetric Boolean function is $\sigma_{8,[1,2,3,5,7]}$.  When the set considered is 
$\Gamma=\{-1,1\}$, then any solution to (\ref{gensec}) is said to give a bisection of the binomial coefficients $\binom{n}{l}$, $0\leq l \leq n$. Observe that such solution provides us with two disjoints 
sets $A$ and $A'$ such that $A\cup A'=\{0,1,2,\cdots, n\}$ and
$$\sum_{l\in A}\binom{n}{l}=\sum_{l\in A'}\binom{n}{l}=2^{n-1}.$$
As mentioned in the introduction, the problem of bisecting binomial coefficients was first discussed by Mitchell \cite{mitchell}.

Continue with equation (\ref{gensec}) over $\Gamma=\{-1,1\}$.  If $n$ is even, then $\delta_l = \pm(-1)^l$, for $l=0,1,\cdots, n$, are two solutions to (\ref{gensec}).  On the other hand,
if $n$ is odd, then the symmetry of the binomial coefficients  implies that $(\delta_0,\cdots, \delta_{(n-1)/2},-\delta_{(n-1)/2},\cdots,-\delta_0)$  are $2^{(n+1)/2}$ solutions to (\ref{gensec}).
These are called trivial solutions.  A balanced symmetric Boolean function in $n$ variables which corresponds to one of the trivial solutions of (\ref{gensec}) over $\Gamma=\{-1,1\}$ is said to be a {\it trivially balanced function}.  
Computations suggest that a majority of the balanced symmetric Boolean functions are trivially balanced, thus it is of great interest to find non-trivially balanced symmetric Boolean functions.  In 
the literature, these functions are called {\it sporadic} balanced symmetric Boolean functions.  For example, the relation (\ref{sporadicfirstex}) is not trivial, therefore $\sigma_{8,[1,2,3,5,7]}$ is a
sporadic balanced symmetric Boolean function.  See \cite{cusick3, cls, jeff} for more information.  In \cite{sarkarmaitra}, Sarkar and Maitra showed that there is an infinite amount of sporadic balanced symmetric Boolean functions.

The above discussion shows the link between balancedness of symmetric Boolean functions and solutions to (\ref{gensec}) over the set $\Gamma=\{-1,1\}$.  We now turn our attention to balancedness of 
perturbations of the form $\sigma_{n,[k_1,\cdots,k_s]}+F({\bf X})$, where $1\leq k_1<\cdots<k_s$ are integers and $F({\bf X})$ is a non-zero Boolean polynomial in $j$ (fixed) variables, and its connection to equation 
(\ref{gensec}).  For simplicity of the writing, consider the case $\sigma_{n,k}+F({\bf X})$. Recall that
\begin{equation}
S(\sigma_{n,k}+F({\bf X}))=\sum_{m=0}^jC_m(F)S\left(\sum_{i=0}^m\binom{m}{i}\sigma_{n-j,k-i}\right),
\end{equation}
where $C_m(F)$, for $m=0,1,\cdots, j$, are given by
\begin{equation}
C_m(F) = \sum_{{\bf x}\in \mathbb{F}_2^j \text{ with }w_2({\bf x})=m} (-1)^{F({\bf x})}.
\end{equation}
Expressing exponential sums of symmetric Boolean functions as binomial sums leads to 
\begin{equation}
S(\sigma_{n+j,k}+F({\bf X}))=\sum_{l=0}^n\left(\sum_{m=0}^j C_m(F)(-1)^{\sum_{i=0}^m \binom{m}{i}\binom{l}{k-i}}\right)\binom{n}{l}.
\end{equation}
Observe that
\begin{eqnarray}
\left|\sum_{m=0}^j C_m(F)(-1)^{\sum_{i=0}^m \binom{m}{i}\binom{l}{k-i}}\right|&\leq& \sum_{m=0}^j |C_m(F)| \\\nonumber
&\leq& \sum_{m=0}^j \binom{j}{m} = 2^j.
\end{eqnarray}
Also,
\begin{equation}
\sum_{m=0}^j C_m(F)(-1)^{\sum_{i=0}^m \binom{m}{i}\binom{l}{k-i}}\equiv \sum_{m=0}^j C_m(F)= S(F)\equiv 0 \mod 2.
\end{equation}
Therefore, balancedness of a perturbation of the form $\sigma_{n+j,k}+F({\bf X})$ is connected to solutions of (\ref{gensec}) over the set 
$$\Gamma_j^{(e)}=\{x\in 2\mathbb{Z}\,:\, |x|\leq 2^j\}=\{0,\pm2, \pm4,\pm 6,\cdots, \pm 2^j\}.$$  
Any solution to (\ref{gensec}) over $\Gamma_j^{(e)}$ can be divided by 2 to produce a solution of (\ref{gensec}) over the set
$$\Gamma_j=\{x\in \mathbb{Z}\,:\, |x|\leq 2^{j-1}\}=\{0,\pm1, \pm2,\pm 3,\cdots, \pm 2^{j-1}\}.$$
The opposite is clearly true, that is, any solution to (\ref{gensec}) over $\Gamma_j$ can be multiplied by 2 to produce a solution over $\Gamma^{(e)}_j$.  Therefore, in this study, most of the 
results are written in the language of the set $\Gamma_j$.  
\begin{remark}
The same conclusion can be reached from a perturbation of the form $\sigma_{n+j,[k_1,\cdots,k_s]}+F({\bf X})$.  Also, observe that if $F({\bf X})$ is the zero polynomial, then we are back at the problem of
bisecting binomial coefficients and therefore the corresponding set is $\Gamma=\{-1,1\}$.
\end{remark}

As in the case for bisections of binomial coefficients, we can define {\em trivial solutions} to (\ref{gensec}) over $\Gamma_j$.
 If $n$ is odd, then the symmetry of the binomial coefficients implies that 
\begin{equation}
\label{trivial1}
(\delta_0,\cdots, \delta_{(n-1)/2},-\delta_{(n-1)/2},\cdots,-\delta_0), 
\end{equation}
with $\delta_i \in \Gamma_j$, are $(2^j+1)^{\frac{n+1}{2}}$ solutions to (\ref{gensec}).  If $n$ is even, then 
\begin{equation}
\label{trivial2}
\delta_l = (-1)^l m, \text{ for } l=0,1,\cdots, n, \text{ and }m\in \Gamma_j 
\end{equation}
are $2^{j}+1$ solutions to (\ref{gensec}) over $\Gamma_j$.  Also, the symmetry of the binomial coefficients implies that 
\begin{equation}
\label{trivial3}
(\delta_0,\cdots, \delta_{n/2-1},0,-\delta_{n/2-1},\cdots,-\delta_0), 
\end{equation}
with $\delta_i \in \Gamma_j$ are $(2^j+1)^{\frac{n}{2}}$ solutions to (\ref{gensec}) over $\Gamma_j$ (observe that the trivial solution $(0,0,\cdots,0)$ is of the forms (\ref{trivial2}) and (\ref{trivial3})).  
Solutions of the form (\ref{trivial1}) for $n$ odd or of the forms (\ref{trivial2}) or (\ref{trivial3}) for $n$ even are called trivial solutions to (\ref{gensec}) over $\Gamma_j$.  

There are other solutions 
which at first sight do not seem to be trivial, for example, 
\begin{equation}
\label{notintrivialform}
 -2\binom{12}{3}+2\binom{12}{4}+\binom{12}{6}-2\binom{12}{7}+2\binom{12}{10}-2\binom{12}{11}+2\binom{12}{12}=0.
\end{equation}
However, using the symmetry of binomial numbers, equation (\ref{notintrivialform}) can be re-written as
\begin{equation}
\label{trivialform}
 \sum_{i=0}^{12} (-1)^i \binom{12}{i}=0,
\end{equation}
which is of the form (\ref{trivial2}).  This invites us to define equivalence of solutions.  We say that two solutions $(\delta_0^{(1)},\delta_1^{(1)},\cdots, \delta_n^{(1)})$ and 
$(\delta_0^{(2)},\delta_1^{(2)},\cdots, \delta_n^{(2)})$ are equivalent, and write $(\delta_0^{(1)},\delta_1^{(1)},\cdots, \delta_n^{(1)})\sim(\delta_0^{(2)},\delta_1^{(2)},\cdots, \delta_n^{(2)})$,
if
\begin{enumerate}
 \item both are non-zero solutions and
 $$\frac{1}{g_2}(\delta_0^{(2)},\delta_1^{(2)},\cdots, \delta_n^{(2)})=\pm \frac{1}{g_1}(\delta_0^{(1)},\delta_1^{(1)},\cdots, \delta_n^{(1)}),$$
 where $g_i=\gcd(\delta_0^{(i)},\delta_1^{(i)},\cdots, \delta_n^{(i)})$, or
 \item one solution can be obtained from the other by using the symmetry of the binomial numbers, as it is the case of (\ref{notintrivialform}) and (\ref{trivialform}), or
 \item one solution can be obtained from the other by combining the previous two cases.
\end{enumerate}
Because of this, we now say that solutions of the form (\ref{trivial1}), (\ref{trivial2}) or (\ref{trivial3}) are written in {\em trivial form} (or just that they are {\em trivial form solutions}) and 
extend the definition of {\em trivial solution} to any solution that is equivalent to one of the trivial form solutions.  For example, (\ref{notintrivialform}) is a trivial solution.  For 
${\boldsymbol{\delta}}=(\delta_0,\delta_1,\cdots,\delta_n) \in \Gamma_j$, define
$$[\delta_0,\delta_1,\cdots,\delta_n]=\{(\delta'_0,\delta'_1,\cdots,\delta'_n) \in \Gamma_j\,|\, (\delta'_0,\delta'_1,\cdots,\delta'_n)\sim (\delta_0,\delta_1,\cdots,\delta_n) \},$$
that is, $[\delta_0,\delta_1,\cdots,\delta_n]$ is the equivalence class of ${\boldsymbol{\delta}}$ under $\sim$.  Observe that if $n$ is odd, then every trivial form solution, and therefore every trivial 
solution, belongs to the class $[0,0,\cdots,0]$.  If $n$ is even, then every trivial solution belongs to either $[0,0,\cdots, 0]$ or $[1,-1,1,-1,\cdots,-1,1]$.

As expected, the number of solutions to (\ref{gensec}) over $\Gamma_j$ grows exponentially in $n$.  This can already be seen in the number of trivial form solutions, since there are $(2^j+1)^{\frac{n+1}{2}}$
such solutions for $n$ odd and $(2^j+1)^{\frac{n}{2}}+2^{j}$ for $n$ even.  In fact, let $\Omega(n,j) = \{{\boldsymbol{\delta}} \in \Gamma_j^n\,|\, {\boldsymbol{\delta}} \text{ is a solution to }(\ref{gensec})\}$
and define $\gamma_j(n):=|\Omega(n,j)|$, that is, the number of solutions to (\ref{gensec}) that lie in $\Gamma_j$.  Table \ref{numberofsols} contains $\gamma_j(n)$ for various $n$'s and $j$'s.  
\begin{table}[h!]
\caption{Number of solutions to (\ref{gensec}) that lie in $\Gamma_j$.}
\centering
\begin{tabular}{|c|c|c|c|c|c|c|c|c|c|c|}
\hline
$n$ & 1 & 2& 3& 4& 5& 6& 7& 8& 9& 10\\
\hline
$\gamma_1(n)$ & 3& 5& 9& 15& 39& 45& 129& 149& 243& 369\\
$\gamma_2(n)$ & 5& 13& 41& 103& 275& 685& 2525& 5221& 13897& 32717\\
$\gamma_3(n)$ & 9& 41& 219& 1033& 5181& 23035& 121921& *& *& * \\
$\gamma_4(n)$ & 17& 145& 1469& 12969& 120521& *& *& *& *& * \\
$\gamma_5(n)$ & 33& 545& 10659& 183477&*& *& *& *& *& *\\
$\gamma_6(n)$ & 65& 2113&81421&*&*& *& *& *& *& *\\
$\gamma_7(n)$ & 129& 8321&636099&*&*& *& *& *& *& * \\
\hline
\end{tabular}
\label{numberofsols}
\end{table}
\begin{remark}
The reader is invited to read the very interesting paper of Iona\c{s}cu, Martinsen and St$\check{\mbox{a}}$nic$\check{\mbox{a}}$ about bisecting binomial coefficients \cite{ims}.  As part of their work, 
they consider the number of nontrivial bisections.  They found many previously unknown infinite classes of integers which admit nontrivial bisections, and a class with only trivial bisections. They
also found several bounds for the number of nontrivial bisections and compute the exact number of such bisections for $n \leq 51$.  Some of these results may be extended to 
perturbations of symmetric Boolean functions.  For example, following similar techniques form \cite{ims} we obtained an integral representation for $\gamma_j(n)$.  Let $\mathbb{V}_j=[0,2^{j-1}]\cap 
\mathbb{Z}$. The notation ${\bf x}=(x_0,x_1,\cdots, x_n)$ is used to represent a vector
${\bf x}\in \mathbb{V}_j^{n+1}$. For ${\bf x}\in \mathbb{V}_j^{n+1}$, let $w({\bf x})$ represents the number of non-zero entries of ${\bf x}$.  The number $\gamma_j(n)$ is given by the following integral
\begin{equation}
\label{integralcounts}
\gamma_j(n)=\sum_{{\bf x} \in \mathbb{V}_j^{n+1}}2^{w({\bf x})}\int_0^1 \prod_{i=0}^{n}\cos\left(\pi x_i \binom{n}{i} s\right)\,ds.
\end{equation}
\end{remark}

Many of the solutions that are counted in Table \ref{numberofsols} are equivalent to some others. Therefore, the amount of ``meaningful" solutions should be significantly 
smaller than the numbers that are presented in Table \ref{numberofsols}.  Define $\omega_j(n)$ to be the number of different equivalence classes on $\Omega(n,j)$ under $\sim$, that is, the cardinality of 
the quotient set $\Omega(n,j)/\sim$.  For example,
\begin{equation}
 \Omega(4,2)/\sim\,\, = \{[0,0, 0,0,0], [0,1,-2,2,0], [2,-2, 1,0,0], [2, 1, -1,0,0], [2, -1, 0,0,2]\},
\end{equation}
and therefore $\omega_2(4) = 5$.  Table \ref{numberofequivsols} contains $\omega_j(n)$ for various $n$'s and $j$'s.  The order of magnitude of these numbers is smaller than the ones on Table 
\ref{numberofsols}, as expected, however, it is apparent that there are many meaningful solutions that are not trivial.

\begin{table}[ht!]
\caption{Number of different equivalence classes on $\Omega(n,j)$ under $\sim$.}
\centering
\begin{tabular}{|c|c|c|c|c|c|c|c|c|c|c|}
\hline
$n$ & 1 & 2& 3& 4& 5& 6& 7& 8& 9& 10\\
\hline
$\omega_1(n)$ & 1& 2& 1& 3& 2& 3& 3& 7& 1& 5\\
$\omega_2(n)$ & 1& 2& 2& 5& 2& 13& 7& 36& 26& 71\\
$\omega_3(n)$ & 1& 2& 2& 13& 10& 72& 77& 389& 274& 1681 \\
$\omega_4(n)$ & 1& 2& 2& 45& 37& 504& 443& 5076& 4336& * \\
$\omega_5(n)$ & 1& 2& 2& 161& 127& 3811& 3119& *& *& *\\
$\omega_6(n)$ & 1& 2& 2& 649 & 481& 29742& *& *& *& *\\
$\omega_7(n)$ & 1& 2& 2& 2521&2005& *& *& *& *& * \\
\hline
\end{tabular}
\label{numberofequivsols}
\end{table}

A balanced perturbation 
of the form $\sigma_{n+j,[k_1,\cdots,k_s]}+F({\bf X})$ is said to be {\em trivially balanced} if it corresponds to one of the trivial solutions of (\ref{gensec}) over $\Gamma_j$.  Analogous to the case for bisections of binomial 
coefficients, a non-trivially balanced perturbation of the form $\sigma_{n+j,[k_1,\cdots,k_s]}+F({\bf X})$ is called {\it sporadic} balanced Boolean perturbation.
As it is the case of balanced symmetric Boolean functions, many balanced perturbations seem to be trivially balanced.  For example, consider $\sigma_{n,k}+X_1$, which is the simplest perturbation to 
an elementary symmetric Boolean polynomial. We have the following result.
\begin{theorem}
\label{trivialbalanced1}
 Let $k$ be a positive integer.  Let $r=\lfloor \log_2 (k)\rfloor+1$. The perturbation $\sigma_{n,k}+X_1$ is (trivially) balanced for $n=2^{r}m+k-1$ where $m$ is any natural number.
\end{theorem}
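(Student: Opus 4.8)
The plan is to collapse the exponential sum of the perturbation into a single binomial sum with a $2^{r}$-periodic coefficient sequence, and then exhibit an anti-symmetry of that sequence which forces the sum to vanish and simultaneously identifies the associated solution of (\ref{gensec}) as one in trivial form. First I would apply (\ref{perturbationeq}) with $F(\mathbf{X})=X_{1}$ and $j=1$; since $C_{0}(X_{1})=1$ and $C_{1}(X_{1})=-1$, this yields $S(\sigma_{n,k}+X_{1})=S(\sigma_{n-1,k})-S(\sigma_{n-1,k}+\sigma_{n-1,k-1})$. Rewriting both terms through (\ref{maingen}) and using Pascal's rule $\binom{l}{k}+\binom{l}{k-1}=\binom{l+1}{k}$ to combine exponents gives
\[
S(\sigma_{n,k}+X_{1})=\sum_{l=0}^{N}c_{l}\binom{N}{l},\qquad N:=n-1,\quad c_{l}:=(-1)^{\binom{l}{k}}-(-1)^{\binom{l+1}{k}}.
\]
Put $B(j):=(-1)^{\binom{j}{k}}$, so $c_{l}=B(l)-B(l+1)$. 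Since $k<2^{r}$, Lucas' Theorem shows $B$, and hence $(c_{l})$, is periodic with period $2^{r}$; and for $n=2^{r}m+k-1$ we have $N\equiv k-2\pmod{2^{r}}$.

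The heart of the proof is the reflection identity $B(j)=B(k-1-j)$, valid for all $j\in\mathbb{Z}$. For $j\equiv 0,1,\dots,k-1\pmod{2^{r}}$ both sides equal $1$, since then the top argument is smaller than $k$. For $j\equiv k+a\pmod{2^{r}}$ with $0\le a\le 2^{r}-1-k$, one has $k-1-j\equiv 2^{r}-1-a\pmod{2^{r}}$, and here I would use two elementary facts: (i) $\binom{x}{k}\bmod 2$ is $2^{r}$-periodic in $x$ over all of $\mathbb{Z}$ (Vandermonde together with $\binom{2^{r}}{i}\equiv 0\pmod 2$ for $0<i<2^{r}$), and (ii) the negative-argument identity $\binom{-1-a}{k}=(-1)^{k}\binom{a+k}{k}$. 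Combining them, $B(2^{r}-1-a)=(-1)^{\binom{-1-a}{k}}=(-1)^{\binom{a+k}{k}}=B(k+a)$ (the factor $(-1)^{k}$ in the exponent is harmless), which is the claim. Granting this, $c_{l}+c_{k-2-l}=\bigl(B(l)-B(l+1)\bigr)+\bigl(B(k-2-l)-B(k-1-l)\bigr)=0$, since $B(k-2-l)=B(l+1)$ and $B(k-1-l)=B(l)$; thus $(c_{l})$ is anti-symmetric about $(k-2)/2$ modulo $2^{r}$.

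To conclude, since $(c_{l})$ is $2^{r}$-periodic and $N\equiv k-2\pmod{2^{r}}$, we get $c_{N-l}=c_{k-2-l}=-c_{l}$ for every $l$. Replacing $l$ by $N-l$ in the binomial sum and using $\binom{N}{l}=\binom{N}{N-l}$ gives $\sum_{l=0}^{N}c_{l}\binom{N}{l}=\sum_{l=0}^{N}c_{N-l}\binom{N}{l}=-\sum_{l=0}^{N}c_{l}\binom{N}{l}$, so the sum is $0$ and $\sigma_{n,k}+X_{1}$ is balanced. Moreover the solution of (\ref{gensec}) over $\Gamma_{1}=\{0,\pm 1\}$ attached to this perturbation is $(c_{0}/2,\dots,c_{N}/2)$ (the $c_{l}$ lie in $\{0,\pm 2\}$), and it satisfies $c_{l}/2=-c_{N-l}/2$: this is exactly a trivial-form solution — of type (\ref{trivial1}) when $N$ is odd, and of type (\ref{trivial3}), with the forced central zero $c_{N/2}=0$, when $N$ is even. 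Hence the perturbation is trivially balanced.

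The only step I expect to demand real care is the reflection identity $B(j)=B(k-1-j)$, in particular the wrap-around range $j\equiv k,\dots,2^{r}-1\pmod{2^{r}}$, where one must set up the $2$-adic periodicity of $\binom{\cdot}{k}\bmod 2$ for negative arguments together with the negative-argument binomial formula. The remaining ingredients — the reduction via (\ref{perturbationeq}), Pascal's rule, and the involution $l\mapsto N-l$ — are routine.
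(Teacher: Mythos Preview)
Your proof is correct and follows essentially the same route as the paper: both reduce to the difference $S(\sigma_{n-1,k})-S(\sigma_{n-1,[k,k-1]})$, both rely on Pascal's rule (your $\binom{l}{k}+\binom{l}{k-1}=\binom{l+1}{k}$ is the paper's ``$l\in N(k)\Leftrightarrow l-1\in N(k,k-1)$''), and both hinge on the reflection $\binom{l}{k}\equiv\binom{n-l}{k}\pmod 2$ for $n=2^{r}m+k-1$, combined with the symmetry $\binom{N}{l}=\binom{N}{N-l}$. The only real difference is packaging: the paper keeps the two binomial sums separate and matches their $-1$ coefficients set-theoretically, while you fold them into a single sum with $c_{l}=B(l)-B(l+1)$ and exhibit the anti-symmetry $c_{N-l}=-c_{l}$ directly; you also supply an explicit proof of the reflection identity (via $2$-adic periodicity and the negative-argument binomial formula) where the paper simply asserts it, and you spell out why the resulting solution lies in trivial form.
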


\begin{proof}
Recall that 
\begin{eqnarray}
\label{easyper}
 S(\sigma_{n,k}+X_1)&=&S(\sigma_{n-1,k})-S(\sigma_{n-1,[k,k-1]})\\\nonumber
 &=& \sum_{l=0}^{n-1}(-1)^{\binom{l}{k}}\binom{n-1}{l}-\sum_{l=0}^{n-1}(-1)^{\binom{l}{k}+\binom{l}{k-1}}\binom{n-1}{l}.
\end{eqnarray}
Let $N(k)$ be the set of integers $l$ such that $\binom{l}{k}$ is odd.  Similarly, define $N(k,k-1)$ to be the set of integers $l$ such that $\binom{l}{k}+\binom{l}{k-1}$ is odd. Lucas' Theorem 
implies that $l\in N(k)$ if and only if $l-1\in N(k,k-1)$.  Also, the choice $n=2^rm+k-1$ implies that if $0\leq l \leq n$, then $l\in N(k)$ if and only if $n-l \in N(k)$.  

Suppose that $l_0\in N(k)$, that is, that the coefficient of 
\begin{equation}
\binom{n-1}{l_0} 
\end{equation}
in the first sum of (\ref{easyper}) is $-1$.  By the choice of $n$ one has $n-l_0\in N(k)$.  But if this is true, then $n-l_0-1\in N(k,k-1)$ and therefore the coefficient of 
\begin{equation}
 \binom{n-1}{n-l_0-1}=\binom{n-1}{n-1-(n-l_0-1)}=\binom{n-1}{l_0}
\end{equation}
in the second sum of (\ref{easyper}) is also $-1$.  The converse is also true.  If $l_1 \in N(k,k-1)$, then the coefficient of 
\begin{equation}
 \binom{n-1}{l_1}
\end{equation}
in the second sum of (\ref{easyper}) is $-1$.  But then, $l_1+1\in N(k)$ and by the choice of $n$ one has that $n-(l_1+1)=n-1-l_1 \in N(k)$.  Thus, the coefficient of 
\begin{equation}
 \binom{n-1}{n-1-l_1} = \binom{n-1}{l_1}
\end{equation}
in the first sum of (\ref{easyper}) is also $-1$.  We conclude that $\sigma_{n,k}+X_1$ is trivially balanced.
\end{proof}

\begin{example}
 Consider the perturbation $\sigma_{20,5}+X_1$.  Theorem \ref{trivialbalanced1} implies that this perturbation is trivially balanced.  Explicitly,
 \begin{eqnarray}
 \nonumber
  S(\sigma_{20,5}+X_1)&=&\sum_{l=0}^{19}\left((-1)^{\binom{l}{5}}-(-1)^{\binom{l}{5}+\binom{l}{4}}\right)\binom{19}{l}\\
  &=& 2\binom{19}{4} -2 \binom{19}{5}+ 2\binom{19}{6}-2\binom{19}{7}+2\binom{19}{12}-2 \binom{19}{13}+2\binom{19}{14}-2\binom{19}{15}\\\nonumber
  &=& 0 \,\,\, (\text{trivially}).
 \end{eqnarray}
\end{example}

We had tried to find sporadic balanced perturbations of the form $\sigma_{n,k}+X_1$, but our attempts failed.  This led us to believe that a conjecture similar to the one presented by Cusick, Li and 
St$\check{\mbox{a}}$nic$\check{\mbox{a}}$ in \cite{cls} for the case of elementary symmetric Boolean functions holds (also see \cite{cls2}).  To be specific, we conjecture the following:

\begin{conjecture}
No perturbation of the form $\sigma_{n,k}+X_1$ is balanced except for the trivial cases, i.e. when $n=2^{r}m+k-1$ where $r=\lfloor \log_2(k_s)\rfloor+1$ and $m$ is a positive integer.
\end{conjecture}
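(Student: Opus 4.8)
The plan is to adapt the circle of ideas that settles the analogous statement for elementary symmetric Boolean functions (the conjecture of \cite{cls}, see also \cite{cls2,ggz}): reduce the question to the behaviour of a linear recurrence sequence, isolate the eigenvalues of largest modulus, and argue that off the trivial residue class their contribution cannot be annihilated by the remaining terms.

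First I would record the closed form. Since $X_1$ is balanced, Theorem \ref{pertrec} gives that $\{S(\sigma_{n,k}+X_1)\}$ satisfies the recurrence with characteristic polynomial $\prod_{l=1}^{s}\Phi_{2^{a_l+1}}(X-1)$, where $\bar k=2\lfloor k/2\rfloor+1=1+2^{a_1}+\cdots+2^{a_s}$; one checks $a_s=\lfloor\log_2\bar k\rfloor=r-1$ with $r=\lfloor\log_2 k\rfloor+1$. The roots are the numbers $1+\zeta$ with $\zeta$ a primitive $2^{a_l+1}$-th root of unity, the $2^{n}$-term is absent (its coefficient is $c_0(k)\,S(X_1)/2=0$), and the identity $1+e^{i\alpha}=2\cos(\alpha/2)e^{i\alpha/2}$ shows that the unique pair of roots of maximal modulus is $\lambda_{\pm}=2\cos(\pi/2^{r})\,e^{\pm i\pi/2^{r}}$, coming from $\zeta=e^{\pm i\pi/2^{r-1}}$, while every other root has modulus at most $\rho_2:=2\cos(3\pi/2^{r})$ (or $2\cos(\pi/2^{r-1})$, when it arises from a smaller $a_l$). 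Thus
$$S(\sigma_{n,k}+X_1)=2\,\mathrm{Re}\!\left(d_{\lambda_+}\lambda_+^{\,n}\right)+E(n),\qquad |E(n)|\le C(k)\,\rho_2^{\,n},$$
for suitable constants $d_\lambda$ and an explicit $C(k)$. The spacing of the zeros of $n\mapsto\cos(n\pi/2^{r}+\arg d_{\lambda_+})$ is exactly $2^{r}$, which matches the period of the trivial solutions of Theorem \ref{trivialbalanced1}, and $n\equiv k-1\pmod{2^{r}}$ is precisely the integer residue class that lands nearest these zeros.

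Second I would compute $d_{\lambda_+}$ explicitly, via partial fractions on the rational generating function $\sum_{n}S(\sigma_{n,k}+X_1)\,x^{n}$ (whose denominator is the reversal of the characteristic polynomial) or by Lagrange interpolation at the roots, the aim being to prove $d_{\lambda_+}\ne 0$ and to pin down $\arg d_{\lambda_+}$. With this in hand the dichotomy is: for $n$ in a residue class mod $2^{r}$ that keeps $\cos(n\pi/2^{r}+\arg d_{\lambda_+})$ bounded away from $0$, the estimate $2|d_{\lambda_+}|\,\rho^{\,n}|\cos(\cdot)|>C(k)\,\rho_2^{\,n}$ holds once $n$ exceeds an explicit threshold, so $S(\sigma_{n,k}+X_1)\ne 0$ there; and the small values of $n$ are handled directly from $S(\sigma_{n,k}+X_1)=2\sum_{l:\,\binom{l}{k-1}\ \mathrm{odd}}(-1)^{\binom lk}\binom{n-1}{l}$, which by Lucas' Theorem even vanishes identically for $n\le k-1$ since that sum is empty. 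For instance, when $k\in\{2,3\}$ the recurrence has order $2$, $E(n)\equiv 0$, and the claim follows at once; the whole difficulty lies in making the argument uniform as $k\to\infty$.

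The genuine obstacle, and the reason this stays a conjecture, is the transitional range $n\asymp 2^{r}$ together with the residue classes adjacent to the trivial one. There $n\pi/2^{r}+\arg d_{\lambda_+}$ is within $O(2^{-r})$ of $\tfrac{\pi}{2}+\pi\mathbb Z$, so the dominant term has size only $\rho^{\,n}\cdot O(2^{-r})$ and can be comparable to, or smaller than, the tail $C(k)\,\rho_2^{\,n}$ for all $n$ up to a bound growing like a power of $k$; since $k$ is not fixed, no finite computation closes the gap. Resolving it would require an exact argument rather than an asymptotic one — presumably a refinement of the Lucas-theorem pairing $l\leftrightarrow n-1-l$ of Theorem \ref{trivialbalanced1}, showing that the cancellation it produces is perfect only when $n\equiv k-1\pmod{2^{r}}$, for instance by exhibiting a modulus $2^{t}$ for which $S(\sigma_{n,k}+X_1)\not\equiv 0\pmod{2^{t}}$ in the other near-zero classes. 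This is exactly the kind of uniform-in-degree control that makes the corresponding elementary symmetric conjecture hard.
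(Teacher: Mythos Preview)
The statement you address is a \emph{Conjecture} in the paper, and the paper offers no proof of it whatsoever; the authors merely report that their search for sporadic balanced perturbations of the form $\sigma_{n,k}+X_1$ came up empty and then state the conjecture by analogy with \cite{cls}. So there is no argument in the paper to compare your proposal against.

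Your write-up is also not a proof, and you say so yourself (``the reason this stays a conjecture''). What you have produced is a correct diagnosis of why the natural dominant-eigenvalue attack stalls: Theorem~\ref{pertrec} kills the $2^n$ term because $S(X_1)=0$, the surviving roots $1+\zeta$ with $\zeta$ a primitive $2^r$-th root give the dominant pair, the cosine factor has its integer near-zeros exactly on the residue class $n\equiv k-1\pmod{2^r}$, and in the classes adjacent to that one the leading contribution shrinks to order $\rho^{\,n}\cdot 2^{-r}$, which the tail can swamp for $n$ up to a threshold growing with $k$. This is precisely the uniformity-in-degree obstruction that keeps the full Cusick--Li--St\u{a}nic\u{a} conjecture open; the paper's Section~\ref{numvariablesgrows} (following \cite{ggz}) proves only the asymptotic version, for $n$ beyond an unspecified $n_0$ depending on the degree, and your outline recovers exactly that much and no more. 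Your observation that the cases $k\in\{2,3\}$ collapse because the recurrence then has order~$2$ and $E(n)\equiv 0$ is a nice bonus the paper does not mention.

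In short: there is no gap to name because neither you nor the paper claims a proof; your analysis of the approach and its limitation is sound.
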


Similar techniques can be used to show that other infinite families are trivially balanced.  However, we will not show them here, except for the next example (for which we omit the proof).

\begin{theorem}
The perturbation
 \begin{equation}
 \sigma_{2^{l+1}D-1,2^l}+X_1+X_2+\cdots+X_{2m},
 \end{equation}
where $D, l$ and $m$ are positive integers, is trivially balanced.  In view of Theorem \ref{samepert}, the perturbation
\begin{equation}
 \sigma_{2^{l+1}D,2^l+1}+X_1+X_2+\cdots+X_{2m}
 \end{equation}
is also trivially balanced.
\end{theorem}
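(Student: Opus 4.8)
The plan is to write the perturbation as $\sigma_{N,k}+F$ with $F=X_1+\cdots+X_{2m}$, to identify the associated solution of (\ref{gensec}) explicitly, and then to show it is of trivial form. Since $F$ is linear and involves all $2m$ of its variables, $F(\mathbf x)\equiv w_2(\mathbf x)\pmod 2$, so $C_{m'}(F)=(-1)^{m'}\binom{2m}{m'}$ for $0\le m'\le 2m$; feeding this into (\ref{perturbationeq}) and collapsing $\sum_i\binom{m'}{i}\binom{t}{k-i}=\binom{t+m'}{k}$ gives $S(\sigma_{n+2m,k}+F)=\sum_{t=0}^{n}g_k(t)\binom{n}{t}$, where $g_k(t)=\sum_{m'=0}^{2m}(-1)^{m'}\binom{2m}{m'}(-1)^{\binom{t+m'}{k}}$. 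Each $g_k(t)$ is even because $\sum_{m'}C_{m'}(F)=S(F)=0$, so the whole problem reduces to showing that $\tfrac12\bigl(g_k(0),\dots,g_k(n)\bigr)$ is a trivial solution of (\ref{gensec}) over $\Gamma_{2m}$ for the relevant $n$.

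For the first perturbation, $k=2^l$ and $n=2^{l+1}D-1-2m$, which is odd. I would set $\beta(a):=(-1)^{\binom{a}{2^l}}$ and first record the key lemma — a direct consequence of Lucas' Theorem — that $\beta$ has period $2^{l+1}$, equals $+1$ on $\{0,\dots,2^l-1\}$ and $-1$ on $\{2^l,\dots,2^{l+1}-1\}$, and therefore satisfies the reflection identity $\beta(-1-a)=-\beta(a)$ for every integer $a$ (the involution $a\mapsto 2^{l+1}-1-a$ swaps the two halves of a period). Then, evaluating $g_k(n-t)$, substituting $m'\mapsto 2m-m'$ (harmless since $(-1)^{2m-m'}=(-1)^{m'}$), and using $n+2m=2^{l+1}D-1\equiv-1\pmod{2^{l+1}}$ together with $2^{l+1}\mid 2^{l+1}D$, one obtains $g_k(n-t)=-g_k(t)$. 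Since $n$ is odd, $\tfrac12(g_k(0),\dots,g_k(n))$ is then a solution of (\ref{gensec}) over $\Gamma_{2m}$ of the trivial form (\ref{trivial1}), and pairing $t$ with $n-t$ simultaneously shows the exponential sum vanishes; hence $\sigma_{2^{l+1}D-1,2^l}+X_1+\cdots+X_{2m}$ is trivially balanced. This is in the spirit of the pairing argument behind Theorem \ref{trivialbalanced1}, though it does not specialize to it: the evenness of the number $2m$ of linear terms is used crucially when reindexing (an odd number of terms would introduce the wrong sign).

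For the companion perturbation $\sigma_{2^{l+1}D,2^l+1}+F$, balancedness is immediate from Theorem \ref{samepert} applied to the balanced polynomial $F$: $S(\sigma_{2^{l+1}D,2^l+1}+F)=S(\sigma_{2^{l+1}D-1,2^l}+F)=0$. To get triviality I would run the same computation with $k=2^l+1$ and $\gamma(a):=(-1)^{\binom{a}{2^l+1}}$; Lucas' Theorem shows $\gamma(a)=-1$ exactly when $a$ is odd and its $l$-th binary digit is $1$, from which a one-line case analysis on $a\bmod 2^{l+1}$ yields the twin identity $\gamma(a)+\gamma(-a)=1+(-1)^a$. Here $n=2^{l+1}D-2m$ is even and $n+2m\equiv 0\pmod{2^{l+1}}$, so the same reindexing gives $g_k(t)+g_k(n-t)=(1-1)^{2m}+(-1)^{t}2^{2m}=(-1)^{t}2^{2m}$. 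Thus $\tfrac12(g_k(0),\dots,g_k(n))$ satisfies $\delta_t+\delta_{n-t}=(-1)^t 2^{2m-1}$; transferring the appropriate weight across each pair $\{t,n-t\}$ — the symmetry-of-binomials move built into the equivalence $\sim$ — converts it into $\delta_t=(-1)^t 2^{2m-2}$, which is a trivial-form solution of type (\ref{trivial2}) lying in $\Gamma_{2m}$. Hence this perturbation is trivially balanced as well.

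The only genuine content is the pair of parity facts, $\beta(-1-a)=-\beta(a)$ and $\gamma(a)+\gamma(-a)=1+(-1)^a$; both follow from Lucas' Theorem but must be stated carefully, and everything else is bookkeeping with the substitution $m'\mapsto 2m-m'$ and the periodicity supplied by the free parameter $D$. The subtlety I would flag in the write-up is that in the companion (even-parameter) case the solution produced is not literally one of the trivial forms (\ref{trivial1})–(\ref{trivial3}); recognizing it as trivial genuinely invokes the binomial-symmetry clause of $\sim$, exactly as in the passage from (\ref{notintrivialform}) to (\ref{trivialform}).
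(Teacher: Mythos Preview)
The paper explicitly omits the proof of this theorem (``Similar techniques can be used to show that other infinite families are trivially balanced. However, we will not show them here, except for the next example (for which we omit the proof)''), so there is no argument in the paper to compare against. Your proof is correct and is precisely the kind of argument the paper alludes to: it mirrors the pairing strategy of Theorem~\ref{trivialbalanced1}, replacing the single linear term $X_1$ by $X_1+\cdots+X_{2m}$ and exploiting the reindexing $m'\mapsto 2m-m'$ (which is where the evenness of $2m$ is genuinely used).

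Two small remarks. First, your reflection identities $\beta(-1-a)=-\beta(a)$ and $\gamma(a)+\gamma(-a)=1+(-1)^a$ are indeed straightforward consequences of Lucas' Theorem; for the latter it is worth spelling out in the write-up that when $a$ is odd the $l$-th bits of $a$ and of $2^{l+1}-a$ are complementary, which is the one-line computation behind it. Second, in the companion case your final reduction is exactly the argument used in the paper to pass from (\ref{notintrivialform}) to (\ref{trivialform}) and again in the $n$-even branch of the proof of the main theorem in Section~\ref{numvariablesgrows}: the relation $\delta_t+\delta_{n-t}=(-1)^t\cdot\text{const}$ (with $n$ even) already forces equivalence to the alternating solution $(1,-1,\dots,-1,1)$. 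You handle this correctly; just make sure to note that the target tuple $\delta'_t=(-1)^t 2^{2m-2}$ lies in $\Gamma_{2m}$, which it does since $2^{2m-2}\le 2^{2m-1}$.
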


As it is the case of symmetric Boolean functions, computations suggest that trivially balanced perturbations are quite common.  Therefore, it is a good idea for us to study them in more detail.  Observe that 
Theorem \ref{trivialbalanced1} implies that once $\sigma_{n,k}+X_1$ is trivially balanced at one point, for example, when $n=2^{r}+k-1$, then $\sigma_{n,k}+X_1$ is trivially balanced for infinitely many 
$n$, i.e for $n=2^rm+k-1$ where $m$ is a positive integer.  This is not a coincidence, in fact, it is true for every perturbation. 

Let $1\leq k_1<\cdots<k_s$ be integers.  Consider 
$\sigma_{n+j,[k_1,\cdots, k_s]}+F({\bf X})$ where $F({\bf X})$ is a Boolean polynomial in the variables $X_1,\cdots, X_j$ ($j$ fixed).  Suppose that $\sigma_{n+j,[k_1,\cdots, k_s]}+F({\bf X})$ is trivially 
balanced, that is, it corresponds to a trivial solution $(\delta_0,\delta_1,\cdots,\delta_n)$ to (\ref{gensec}) over $\Gamma_j$.  For the sake of simplicity, suppose that 
$(\delta_0,\delta_1,\cdots,\delta_n)$ is a trivial solution of the form $\delta_{l} = -\delta_{n-l}$.   

The value of $\delta_l$ is given by
\begin{eqnarray}
\delta_l&=&\sum_{m=0}^j C_m(F)(-1)^{\sum_{i=0}^m \binom{m}{i}\left[\binom{l}{k_1-i}+\cdots+\binom{l}{k_s-i}\right]}\\\nonumber
&=&\sum_{m=0}^j C_m(F)(-1)^{\left(\binom{l+m}{k_1}+\cdots+\binom{l+m}{k_s}\right)},
\end{eqnarray}
and we know that
\begin{equation}
\binom{l+m}{k_1},\cdots,\binom{l+m}{k_s}  
\end{equation}
are all periodic modulo 2 with a period length $2^r$ where $r=\lfloor\log_2(k_s)\rfloor+1$.   This means that 
$$\delta_{l}=\delta_{l+2^r}\,\,\, \text{ and }\,\,\, \delta_{n-l}=\delta_{n-l+2^r},$$
but then,
$$\delta_l=-\delta_{n-l}=-\delta_{n+2^r-l}.$$
In other words, the tuple
$$(\delta_0,\delta_1,\cdots,\delta_{n+2^r})$$
is a trivial solution to (\ref{gensec}) over $\Gamma_{j}$.  However, this tuple corresponds to $S(\sigma_{n+2^r+j,[k_1,\cdots,k_s]}+F({\bf X}))$, i.e.
\begin{equation}
 S(\sigma_{n+2^r+j,[k_1,\cdots,k_s]}+F({\bf X})) = \sum_{l=0}^{n+2^r} (2\delta_l) \binom{n+2^r}{l}=0.
\end{equation}
This discussion leads to the following result.
\begin{theorem}
\label{trivialsols}
 Let $1\leq k_1<\cdots<k_s$ be integers and $F({\bf X})$ be a Boolean polynomial in the variables $X_1,\cdots,X_j$ ($j$ fixed).  Let $r=\lfloor \log_2(k_s) \rfloor+1$. Suppose that $n_0$ is a positive 
 integer such that 
\begin{equation}
\sigma_{n_0+j,[k_1,\cdots,k_s]}+F({\bf X})                    
\end{equation}
 is trivially balanced.  Then, 
 \begin{equation}
\sigma_{n_0+m\cdot 2^r+j,[k_1,\cdots,k_s]}+F({\bf X})  
 \end{equation}
 is trivially balanced for every non-negative integer $m$.
\end{theorem}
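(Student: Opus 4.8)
The plan is to express $S(\sigma_{n+j,[k_1,\cdots,k_s]}+F({\bf X}))$ as a binomial sum whose coefficient sequence has period $2^r$, and then to check that each of the three trivial-form types (\ref{trivial1})--(\ref{trivial3}) survives when the number of variables is raised by a multiple of $2^r$ --- exactly the mechanism sketched in the paragraph before the statement, carried out for all three types rather than just the anti-palindromic one. First, record the coefficient sequence: iterating $\binom{l}{k_t}=\binom{l-1}{k_t}+\binom{l-1}{k_t-1}$ yields $\sum_{i=0}^m\binom{m}{i}\binom{l}{k_t-i}=\binom{l+m}{k_t}$, so
\[
S(\sigma_{n+j,[k_1,\cdots,k_s]}+F({\bf X}))=\sum_{l=0}^{n}\delta_l\binom{n}{l},\qquad
\delta_l=\sum_{m=0}^{j}C_m(F)\,(-1)^{\binom{l+m}{k_1}+\cdots+\binom{l+m}{k_s}}.
\]
The structural point is that $\{\delta_l\}_{l\ge0}$ is one integer sequence, independent of $n$: the exponential sum at $n$ variables in the $\sigma$-part is the truncation $(\delta_0,\dots,\delta_n)$ paired against the $n$-th row of Pascal's triangle.

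Second I would prove $\delta_{l+2^r}=\delta_l$ for all $l\ge0$, where $r=\lfloor\log_2(k_s)\rfloor+1$. Since each $k_t\le k_s<2^r$, Lucas' Theorem shows the parity of $\binom{N}{k_t}$ depends only on $N\bmod 2^r$; hence $\binom{l+m}{k_t}\equiv\binom{l+2^r+m}{k_t}\pmod 2$ for every $m$, so the exponents defining $\delta_l$ and $\delta_{l+2^r}$ have the same parity. (This is the periodicity already invoked in the discussion above.)

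Third comes the extension. By hypothesis $(\delta_0,\dots,\delta_{n_0})$ is trivially balanced, i.e.\ equivalent under $\sim$ to a trivial-form vector $(\varepsilon_0,\dots,\varepsilon_{n_0})$; scaling leaves the forms (\ref{trivial1})--(\ref{trivial3}) invariant, and a symmetry-of-binomials move preserves every $\delta_l+\delta_{n_0-l}$ and, when $n_0$ is even, the entry in position $n_0/2$ --- so it is enough to control those data for $N=n_0+m\cdot 2^r$. Assume first $n_0\ge 2^r-1$, so that $(\delta_0,\dots,\delta_{n_0})$ contains a full period and $(\delta_0,\dots,\delta_N)$ continues the same $2^r$-periodic sequence. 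If $\varepsilon$ is anti-palindromic --- form (\ref{trivial1}) for $n_0$ odd, form (\ref{trivial3}) for $n_0$ even --- then $\delta_l+\delta_{n_0-l}=0$ for all $l$, so periodicity (via $N-l\equiv n_0-l\pmod{2^r}$) gives $\delta_l+\delta_{N-l}=0$ for $0\le l\le N$, and for $N$ even this at $l=N/2$ forces $\delta_{N/2}=0$; either way $(\delta_0,\dots,\delta_N)$ is of trivial form. If $\varepsilon_l=(-1)^l c$ as in (\ref{trivial2}) (with $n_0$, $N$ even), then $\delta_l+\delta_{n_0-l}=2(-1)^l c$ and $\delta_{n_0/2}=(-1)^{n_0/2}c$; since $(-1)^l$ also has period $2^r$, periodicity propagates both identities to length $N+1$, again a trivial form. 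Re-applying a symmetry-of-binomials move recovers $(\delta_0,\dots,\delta_N)$, so $\sigma_{N+j,[k_1,\cdots,k_s]}+F({\bf X})$ is trivially balanced. (Alternatively, prove only the step $n_0\mapsto n_0+2^r$ and induct on $m$.)

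The main obstacle is not the periodicity, which does the real work, but keeping the bookkeeping of the equivalence relation $\sim$ straight, together with the mild nuisance that for $n_0<2^r-1$ the segment $(\delta_0,\dots,\delta_{n_0})$ does not see a full period of $\{\delta_l\}$, so a trivial form visible only on this short segment need not be visible on the longer one $(\delta_0,\dots,\delta_N)$. That small range must be handled directly --- for instance by checking that the values of $\{\delta_l\}$ on the unseen part of the period cannot destroy the relevant symmetry --- after which the argument above applies verbatim.
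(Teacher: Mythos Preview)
Your approach is exactly the paper's: use the $2^r$-periodicity of the coefficient sequence $\{\delta_l\}$ to carry the trivial-form condition from length $n_0+1$ to length $n_0+m\cdot 2^r+1$. The paper's proof is literally the sentence ``This is a direct consequence of the above discussion,'' where that discussion treats only the anti-palindromic case $\delta_l=-\delta_{n_0-l}$; your explicit handling of all three trivial forms (\ref{trivial1})--(\ref{trivial3}) and of the equivalence $\sim$ is more thorough than what the paper writes out, and the small-$n_0$ concern you flag is likewise not addressed there.
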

\begin{proof}
 This is a direct consequence of the above discussion.
\end{proof}

\begin{example}
 Consider the perturbation $\sigma_{23,8}+F({\bf X})$ where $$F({\bf X})=X_1+X_2+X_3+X_4+X_5+X_6+X_7+X_8+X_9.$$ This perturbation is actually balanced.  It corresponds to the equation
 \begin{equation}
  \sum_{l=0}^{14} \delta_l \binom{14}{l}=0,
 \end{equation}
where
 \begin{eqnarray*}
  {\boldsymbol{\delta}}&=& (\delta_0,\delta_1,\cdots,\delta_{14})\\
  &=& (-8, 28, -56, 70, -56, 28, -8, 0, 8, -28, 56, -70, 56, -28, 8).
 \end{eqnarray*}
Therefore, $\sigma_{23,8}+F({\bf X})$ is trivially balanced.  Theorem \ref{trivialsols} implies that $\sigma_{23 + 16 m,8}+F({\bf X})$ is trivially balanced for every non-negative integer $m$.
Now, since $F({\bf X})$ is balanced and $\sigma_{23 + 16 m,8}+F({\bf X})$ is balanced, then Theorem \ref{samepert} implies that the perturbation $\sigma_{24,9}+F({\bf X})$ is also balanced.  In fact, 
the perturbation is trivially balanced (as expected) and it corresponds to the equation
\begin{equation}
 \sum_{l=0}^{15}\delta_l\binom{15}{l}=0,
\end{equation}
where 
 \begin{eqnarray*}
  {\boldsymbol{\delta}}&=& (\delta_0,\delta_1,\cdots,\delta_{15})\\
  &=& (1, -9, 37, -93, 163, -219, 247, -255, 255, -247, 219, -163, 93, -37, 9, -1).
 \end{eqnarray*}
Theorem \ref{trivialsols} implies that $\sigma_{24 + 16 m,9}+F({\bf X})$ is trivially balanced for every non-negative integer $m$.
\end{example}

\begin{example}
Consider the perturbation $\sigma_{7, 4}+X_1+X_2$, which is trivially balanced.  The solution to (\ref{gensec}) over $\Gamma_2$ that corresponds to $S(\sigma_{7,4}+X_1+X_2)$ is 
    $$-\binom{5}{2}+\binom{5}{3}=0.$$
In view of Theorem \ref{trivialsols}, we conclude that $\sigma_{7+8m, 4}+X_1+X_2$ is trivially balanced for every non-negative integer $m$.  Now apply Theorem \ref{samepert} to conclude that 
$\sigma_{8+8m, 5}+X_1+X_2$ is balanced for every non-negative integer $m$. Observe that the solution to (\ref{gensec}) over $\Gamma_2$ that corresponds to $S(\sigma_{8,5}+X_1+X_2)$ is 
$$-\binom{6}{3}+2\binom{6}{4}-2\binom{6}{5}+2\binom{6}{6}=0,$$
which, at first, does not look like a trivial solution until we realize that it is equivalent to 
$$\binom{6}{0}-\binom{6}{1}+\binom{6}{2}-\binom{6}{3}+\binom{6}{4}-\binom{6}{5}+\binom{6}{6}=0.$$
Thus, $\sigma_{8, 5}+X_1+X_2$ is trivially balanced.  The same conclusion can be reached for $\sigma_{8+8m, 5}+X_1+X_2$.
\end{example}

\begin{example}
The reader can check that $\sigma_{15, [3,4,5,6]}+X_1+X_2+X_3$ is balanced and that its corresponding solution to (\ref{gensec}) over $\Gamma_3$ is 
\begin{eqnarray*}
  {\boldsymbol{\delta}}&=& (\delta_0,\delta_1,\cdots,\delta_{12})\\
  &=& (1, -2, 0, 2, -1, 0, 0, 0, 1, -2, 0, 2, -1).
 \end{eqnarray*}    
Therefore, $\sigma_{15+8m, [3,4,5,6]}+X_1+X_2+X_3$ is trivially balanced for every non-negative integer $m$.  This family of functions can be lifted up by Theorem \ref{samepertgen} to conclude that
$\sigma_{16+8m, [4,5,6,7]}+X_1+X_2+X_3$ is also balanced for every non-negative integer $m$. 
\end{example}

Let us recap what we have so far.  Exponential sums of perturbations of symmetric Boolean functions are connected to solutions of the equation (\ref{gensec}) over some bounded subsets of the integers.  As 
it is the case for symmetric Boolean functions, the concept of trivial and non-trivial solutions can be defined as well as the concepts of trivially balanced perturbation and sporadic balanced perturbation. 
Similar to the case of symmetric Boolean functions, computations suggest that many balanced perturbations turn out to be trivially balanced.  Moreover, Theorem \ref{trivialsols} states that once a perturbation of fixed degree 
is trivially balanced at one $n$, then it is trivially balanced for infinitely many $n$.  

With regard to this last theorem, in the case of a symmetric Boolean function of fixed degree, only trivially balanced cases exist when the number 
of variables grows.  This was first conjectured by Canteaut and Videau \cite{canteaut} and proved by Guo, Gao and Zhao \cite{ggz}.  In the next section, we establish a similar result for the type of 
perturbations considered in this article.  The techniques to be used are inspired by the ones presented in \cite{ggz}.


\section{Balancedness of perturbations as the number of variables grows}
\label{numvariablesgrows}

In this section we consider balancedness of perturbations when the number of variables grows.  This problem provides information about the converse of the statement of Theorem \ref{trivialsols}.  
The techniques used in this section are very similar to the ones presented in \cite{ggz}.  

Consider a perturbation of the form $\sigma_{n,[k_1,\cdots,k_s]}+F({\bf X})$ where $1\leq k_1<\cdots<k_s$ are integers and $F({\bf X})$ is a polynomial in the variables $X_1,\cdots,X_j$ with $j$ fixed.
The problem is to characterize when $S(\sigma_{n,[k_1,\cdots,k_s]}+F({\bf X}))=0$ for $n$ big enough.   Recall that the exponential sum $S(\sigma_{n,[k_1,\cdots,k_s]}+F({\bf X}))$ satisfies recurrence (\ref{mainrec}), that is, it satisfies
\begin{equation}
\label{mainrecagain}
x_n=\sum_{l=1}^{2^r-1}(-1)^{l-1}\binom{2^r}{l}x_{n-l},
\end{equation}
where $r=\lfloor\log_2(k_s)\rfloor+1$.  In other words, the sequence of exponential sums of our perturbation is a real solution to the linear recurrence (\ref{mainrecagain}).  Therefore, we first answer
the question of when a real solution $\{a_n\}$ to (\ref{mainrecagain}) is zero for some $n$ big enough.

The characteristic polynomial associated to (\ref{mainrecagain}) is 
\begin{equation}
 (X-2)\Phi_4(X-1)\Phi_8(X-1)\cdots\Phi_{2^r}(X-1).
\end{equation}
This implies that any solution $\{a_n\}$ to (\ref{mainrecagain}) has the form
\begin{equation}
\label{expform}
 a_n = d_0 \cdot 2^n + \sum_{l=1}^{2^r-1}d_l \lambda_l^n,
\end{equation}
where $\lambda_l=1+\xi_l^{-1}$ with $\xi_{l}=\exp\left(\frac{\mathrm{i}\pi l}{2^{r-1}}\right)$ and $\mathrm{i}=\sqrt{-1}$.  Observe that $\xi_{2^r-l}=\overline{\xi_l}$, $\lambda_{2^r-l}=\overline{\lambda_l}$,
and $\lambda_{2^{r-1}}=0$.  Moreover, if $\{a_n\}$ is a real solution, then we also have $d_{2^r-l}=\overline{d_l}$.   From now on, suppose that $\{a_n\}$ is a real solution to (\ref{mainrecagain}).

We now follow \cite{ggz} and express $\{a_n\}$ as
\begin{equation}
\label{realan}
 a_n = d_0 \cdot 2^n + 2\sum_{l=1}^{2^{r-1}-1}\text{Re}(d_l \lambda_l^n),
\end{equation}
where $\text{Re}(z)$ denotes the real part of $z$, and define
\begin{equation}
 t_l(n)=\text{Re}(d_l\lambda_l^n), \,\,\, \text{ for }0\leq l \leq 2^{r-1}-1.
\end{equation}
This leads to 
\begin{equation}
 a_n = t_0(n) + 2\sum_{l=1}^{2^{r-1}-1} t_l(n).
\end{equation}
The next lemma gives a characterization of when $a_n$ equals zero for some $n$ big enough.  It is basically the same statement as Lemma 3 of \cite{ggz}, which itself is a modification of a proof of Cai, Green and
Thierauf \cite{cai}. The proof follows almost verbatim as the one in \cite{ggz}.

\begin{lemma}
 Suppose that $\{a_n\}$ is a real solution to the linear recurrence (\ref{mainrecagain}).  Then there exists an integer $n_0$ such that for any $n>n_0$, 
 $$a_n=0\,\,\, \text{ if and only }\,\,\,t_l(n)=0, \,\,\,\text{ for }0\leq l\leq 2^{r-1}-1.$$
\end{lemma}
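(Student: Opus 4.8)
The ``if'' direction is immediate: from the expression $a_n = t_0(n) + 2\sum_{l=1}^{2^{r-1}-1} t_l(n)$ given just above, vanishing of all the $t_l(n)$ forces $a_n=0$. The substance is the ``only if'' direction, and the plan is to turn $a_n$ into a \emph{generalized Dirichlet polynomial with root-of-unity phases} and then isolate the dominant surviving term. The key structural observation is that, although $\lambda_l$ itself is not a root of unity, its argument is: writing $\alpha_l=\pi l/2^{r-1}$ one has
\[
 \lambda_l = 1+e^{-\mathrm{i}\alpha_l} = 2\cos(\alpha_l/2)\,e^{-\mathrm{i}\alpha_l/2} = \rho_l\,\zeta^{\,l},
\]
where $\rho_l:=2\cos(\pi l/2^r)=|\lambda_l|$ and $\zeta:=e^{-\mathrm{i}\pi/2^r}$ is a primitive $2^{r+1}$-th root of unity. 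Hence $t_l(n)=\rho_l^{\,n}\,c_l(n)$ with $c_l(n):=\text{Re}(d_l\zeta^{\,ln})$, and each $c_l$ is periodic in $n$ with period dividing $2^{r+1}$; in particular the $c_l$'s together take only finitely many values. Two further elementary facts will be needed: since $\cos$ is strictly decreasing on $[0,\pi/2)$ and $\pi l/2^r$ runs through $[0,\pi/2)$ as $l$ runs through $0,1,\dots,2^{r-1}-1$, the moduli satisfy $2=\rho_0>\rho_1>\cdots>\rho_{2^{r-1}-1}>0$ and are pairwise distinct; and, letting $\delta$ be the least \emph{positive} value attained by any $|c_l(n)|$, we have $\delta>0$ (if no $|c_l(n)|$ is ever positive, then every $c_l$, hence every $t_l$, hence $\{a_n\}$, vanishes identically and the statement is trivial).

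With these in place I would run the dominant-term argument as follows. Fix $n$ with $a_n=0$, and suppose toward a contradiction that $t_l(n)\neq 0$ for some $l$; let $l_0$ be the least such index. Put $\epsilon_0=1$, $\epsilon_l=2$ for $l\ge 1$, and $C:=\sum_{l>l_0}\epsilon_l|d_l|$. Using $|t_{l_0}(n)|=\rho_{l_0}^{\,n}|c_{l_0}(n)|\ge\delta\rho_{l_0}^{\,n}$ (valid since $c_{l_0}(n)\ne0$), and $|t_l(n)|=\rho_l^{\,n}|c_l(n)|\le|d_l|\rho_l^{\,n}\le|d_l|\rho_{l_0+1}^{\,n}$ for $l>l_0$ (because $\rho_l\le\rho_{l_0+1}<\rho_{l_0}$), the triangle inequality gives
\[
 0=|a_n|\;\ge\;\epsilon_{l_0}|t_{l_0}(n)|-\sum_{l>l_0}\epsilon_l|t_l(n)|\;\ge\;\epsilon_{l_0}\,\delta\,\rho_{l_0}^{\,n}-C\,\rho_{l_0+1}^{\,n}
\]
(the last sum is empty and taken to be $0$ when $l_0=2^{r-1}-1$, in which case $a_n=\epsilon_{l_0}t_{l_0}(n)\ne0$ already). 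Since $\rho_{l_0+1}/\rho_{l_0}<1$, the right-hand side is strictly positive once $n$ exceeds a threshold $N(l_0)$ that depends only on $l_0$ (through $\delta$, the $\rho$'s and $C$), which is the desired contradiction. Taking $n_0:=\max_{0\le l_0\le 2^{r-1}-1}N(l_0)$ — a maximum over finitely many cases — we conclude that for every $n>n_0$, $a_n=0$ forces $t_l(n)=0$ for all $0\le l\le 2^{r-1}-1$.

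The one point that deserves care, and which I expect to be the real content of the proof, is the uniform lower bound $|t_{l_0}(n)|\ge\delta\rho_{l_0}^{\,n}$: it rests entirely on the fact that $\lambda_l/|\lambda_l|=\zeta^{\,l}$ is a root of unity, so that the phases $c_l(n)$ range over a \emph{finite} set and are therefore bounded away from $0$ whenever nonzero; without this the argument collapses, since $t_{l_0}(n)$ could be nonzero yet arbitrarily small. Everything else — the polar form of $\lambda_l$, the strict ordering of the moduli, and the finite case analysis over the $2^{r-1}$ possible leading indices $l_0$ — is routine, and the write-up will closely parallel the proof of Lemma~3 of \cite{ggz} (itself a variant of the argument of Cai, Green and Thierauf in \cite{cai}).
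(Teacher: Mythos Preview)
Your proof is correct and follows essentially the same route the paper points to: the paper does not give its own argument but simply cites Lemma~3 of \cite{ggz} (itself adapted from \cite{cai}), and what you have written is precisely a reconstruction of that dominant-term argument --- writing $\lambda_l=\rho_l\zeta^{l}$ with $\zeta$ a $2^{r+1}$-th root of unity, exploiting the resulting periodicity of the phases $c_l(n)$ to get a uniform positive lower bound $\delta$, and then using the strict ordering $\rho_0>\rho_1>\cdots>\rho_{2^{r-1}-1}>0$ to isolate the leading nonvanishing term. Your handling of the dependence of the threshold on $l_0$ by taking a finite maximum is exactly the right closing move.
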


The number $t_l(n)$ can be expressed as
\begin{equation}
 t_l(n)=|d_l|\left(2\cos\left(\frac{\pi l}{2^r}\right)\right)^n\cos\left(\arg(d_l)-\frac{\pi n l}{2^r}\right).
\end{equation}
This implies that Remark 2 of \cite{ggz} also applies and therefore,
$$t_l(n)=0 \,\,\, \text{ if and only if } \,\,\, d_l=-\xi_l^n\overline{d_l}, \,\,\, \text{ for any }0\leq l \leq 2^{r-1}-1.$$
This characterizes the cases when a real solution $a_n$ to recurrence (\ref{mainrecagain}) is zero for $n$ big, that is, there exists an integer $n_0$ such that for any $n>n_0$, 
 $$a_n=0\,\,\, \text{ if and only }\,\,\,d_l=-\xi_l^n\overline{d_l}, \,\,\, \text{ for any }0\leq l \leq 2^{r-1}-1.$$

Let us go back to our perturbations.  Again, for simplicity, consider a perturbation of the form $\sigma_{n,k}+F({\bf X})$ where $F({\bf X})$ is a Boolean polynomial in the variables 
$X_1,\cdots, X_j$ with $j$ fixed.  
Recall that
\begin{equation}
\label{pertbinom}
S(\sigma_{n+j,k}+F({\bf X}))=\sum_{l=0}^n\left(\sum_{m=0}^j C_m(F)(-1)^{\sum_{i=0}^m \binom{m}{i}\binom{l}{k-i}}\right)\binom{n}{l}.
\end{equation}
Let $\delta_l^{(F)}(k)$ be defined as
\begin{equation}
\delta_l^{(F)}(k)=\sum_{m=0}^j C_m(F)(-1)^{\sum_{i=0}^m \binom{m}{i}\binom{l}{k-i}}.
\end{equation}
In other words, (\ref{pertbinom}) can be re-written as
\begin{equation}
S(\sigma_{n+j,k}+F({\bf X}))=\sum_{l=0}^n\delta_l^{(F)}(k) \binom{n}{l}.
\end{equation}
Observe that if we find a value of $n$ for which (\ref{pertbinom}) is balanced, then we find a solution to the Diophantine equation 
\begin{equation}
\label{dioph}
\sum_{l=0}^n \binom{n}{l} x_l=0,
\end{equation}
over $\Gamma_j^{(e)}$ and the solution would be given by $(\delta_0^{(F)}(k),\delta_2^{(F)}(k),\cdots, \delta_n^{(F)}(k))$.

Recall that every solution to (\ref{mainrecagain}) has the form (\ref{expform}).  The exponential sum of our perturbations, as well as the ones of symmetric Boolean functions, satisfy (\ref{mainrecagain}), therefore they can be
expressed in form (\ref{expform}).  In the case of the symmetric Boolean function $\sigma_{n,[k_1,\cdots,k_s]}$, where $1\leq k_1<\cdots<k_s$ are integers, Cai, Green and Thierauf \cite{cai} found an explicit formula for the coefficients in (\ref{expform}), that is, if $r=\lfloor \log_2(k_s)\rfloor+1$, then
\begin{equation}
S(\sigma_{n,[k_1,\cdots,k_s]})=c_0(k_1,\cdots,k_s)\cdot 2^n + \sum_{l=1}^{2^r-1}c_l(k_1,\cdots,k_s) \cdot \lambda_l,
\end{equation}
where 
\begin{equation}
c_l(k_1,\cdots,k_s)=\frac{1}{2^r} \sum_{i=0}^{2^r-1} (-1)^{\binom{i}{k_1}+\cdots+\binom{i}{k_s}} \xi_l^i.
\end{equation}
In view of equation (\ref{perturbationeq}), this implies that, for $r=\lfloor\log_2(k)\rfloor+1$, one has
\begin{equation}
S(\sigma_{n+j,k}+F({\bf X}))=d_0\cdot 2^n+\sum_{l=1}^{2^r-1}d_l \cdot \lambda_l^n,
\end{equation}
where
\begin{eqnarray*}
d_l &=& \sum_{m=0}^j C_m(F)  \left(\frac{1}{2^r}\sum_{a=0}^{2^r-1}(-1)^{\sum_{i=0}^m \binom{m}{i}\binom{a}{k-i}}\xi_l^a\right)\\
&=&\frac{1}{2^r}\sum_{a=0}^{2^r-1}\left(\sum_{m=0}^j C_m(F)(-1)^{\sum_{i=0}^m \binom{m}{i}\binom{a}{k-i}} \right)\xi_l^a\\
&=&\frac{1}{2^r}\sum_{a=0}^{2^r-1} \delta_{a}^{(F)}(k)\cdot \xi_l^a
\end{eqnarray*}
\begin{remark}
This discussion carries over to perturbations of the form $\sigma_{n,[k_1,\cdot,k_s]}+F({\bf X})$ without too much effort.
\end{remark}
With this information at hand, we are ready to present one of the main results of this article.  This result is a generalization of Canteaut and Videau's observation for symmetric Boolean functions
of fixed degree.  To be specific, we show that, excluding the trivial cases, balanced perturbations of fixed degree do not exist when the number of variables grows.
\begin{theorem}
Suppose that $1\leq k_1<\cdots<k_s$ are integers and $F({\bf X})$ is a Boolean polynomial in the variables $X_1,\cdots,X_j$ ($j$ fixed).  There exists an $n_0$ such that for every $n>n_0$,
$\sigma_{n+j,[k_1,\cdots,k_s]}+F({\bf X})$ is balanced if and only if it is trivially balanced.
\end{theorem}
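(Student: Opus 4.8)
The plan is to track the solution $(\delta_0,\dots,\delta_n)$ of (\ref{gensec}) attached to the perturbation — here $\delta_l$ is the (real, $2^r$-periodic in $l$) coefficient of $\binom nl$ in the binomial-sum expression for $S(\sigma_{n+j,[k_1,\cdots,k_s]}+F({\bf X}))$, with $r=\lfloor\log_2 k_s\rfloor+1$ — and to show by a discrete Fourier transform argument that, for $n$ large, vanishing of the exponential sum forces this solution into a rigid ``palindrome up to sign'' shape that one then matches against the trivial forms (\ref{trivial1})--(\ref{trivial3}). The converse (trivially balanced $\Rightarrow$ balanced) is immediate from the definitions, so only the forward implication needs work.

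First, as in the discussion preceding the statement (and its remark extending the single-degree computation, together with Lemma \ref{reclemma} and Theorem \ref{pertrec} suitably extended), write
$$S(\sigma_{n+j,[k_1,\cdots,k_s]}+F({\bf X}))=\sum_{l=0}^n\delta_l\binom{n}{l}=d_0\cdot 2^n+\sum_{l=1}^{2^r-1}d_l\,\lambda_l^n,$$
where the $\xi_l=\exp\!\left(\frac{\mathrm{i}\pi l}{2^{r-1}}\right)$ are the $2^r$-th roots of unity, $\lambda_l=1+\xi_l^{-1}$ (so $\lambda_{2^{r-1}}=0$), and $d_l=\frac1{2^r}\sum_{a=0}^{2^r-1}\delta_a\,\xi_l^a$ is the discrete Fourier transform of $(\delta_0,\dots,\delta_{2^r-1})$; note $\overline{d_l}=d_{2^r-l}$. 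By the Lemma above and the subsequent characterization of $t_l(n)=0$, there is an $n_0$ such that for all $n>n_0$,
$$S(\sigma_{n+j,[k_1,\cdots,k_s]}+F({\bf X}))=0\iff d_l=-\xi_l^n\,\overline{d_l}\quad\text{for }0\le l\le 2^{r-1}-1.$$
Taking conjugates and using $\overline{d_l}=d_{2^r-l}$, $\xi_{2^r-l}=\overline{\xi_l}$, the condition at an index $l\in(2^{r-1},2^r)$ is the conjugate of the one at $2^r-l\in(0,2^{r-1})$, so the right-hand side is equivalent to $d_l=-\xi_l^n\overline{d_l}$ for all $l\in\{0,1,\dots,2^r-1\}\setminus\{2^{r-1}\}$.

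The crucial step is to recognize $-\xi_l^n\overline{d_l}$ as the $l$-th Fourier coefficient of the reversed sequence $a\mapsto -\delta_{n-a}$ (indices mod $2^r$): substituting $b\equiv n-a$ into $\frac1{2^r}\sum_a(-\delta_{n-a})\xi_l^a$ and using $\xi_l^{2^r}=1$, $\overline{\xi_l^b}=\xi_l^{-b}$ gives $-\xi_l^n\overline{d_l}$. Hence the condition above says the transform of $(\delta_a+\delta_{n-a})_a$ vanishes at every frequency except possibly $l=2^{r-1}$; since $\xi_{2^{r-1}}=-1$, inverting shows this is equivalent to $\delta_a+\delta_{n-a}=c\,(-1)^a$ for all $a$, for some real constant $c$ (periodic extension of $(\delta_a)$ being harmless as $(-1)^a$ has period $2\mid 2^r$). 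This relation already yields $2\sum_{a=0}^n\delta_a\binom na=\sum_{a=0}^n(\delta_a+\delta_{n-a})\binom na=c\sum_{a=0}^n(-1)^a\binom na=0$, so for $n>n_0$ the perturbation is balanced precisely when $(\delta_0,\dots,\delta_n)$ satisfies $\delta_a+\delta_{n-a}=c(-1)^a$ for some $c$. To finish, I would identify such a $\boldsymbol{\delta}$ as trivial: swapping $a\leftrightarrow n-a$ gives $c=(-1)^nc$, so for $n$ odd $c=0$ and $\delta_a=-\delta_{n-a}$, the form (\ref{trivial1}); for $n$ even, $a=n/2$ gives $c=2(-1)^{n/2}\delta_{n/2}=:2m$ with $m\in\Gamma_j$, and either $c=0$ (then $\delta_{n/2}=0$ and $\boldsymbol{\delta}$ is antisymmetric, form (\ref{trivial3})) or $c\ne0$, in which case $\boldsymbol{\delta}^{*}:=((-1)^a m)_{a=0}^n$ is a trivial form solution of type (\ref{trivial2}) with $\delta_a^{*}+\delta_{n-a}^{*}=2m(-1)^a=\delta_a+\delta_{n-a}$ for all $a$, so $\boldsymbol{\delta}$ comes from $\boldsymbol{\delta}^{*}$ by redistributing weight within each pair $\{a,n-a\}$ (the symmetry of the binomial coefficients), hence $\boldsymbol{\delta}\sim\boldsymbol{\delta}^{*}$ is trivial.

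The part I expect to require the most care is this Fourier reinterpretation together with the ensuing bookkeeping: seeing $-\xi_l^n\overline{d_l}$ as the transform of the reversed sequence, handling the special frequencies $l=0$ and $l=2^{r-1}$ correctly, and then matching $\delta_a+\delta_{n-a}=c(-1)^a$ against the exact list of trivial forms — especially separating the odd-$n$ case, where $c$ is forced to vanish, from the even-$n$ case, where $c$ may be nonzero and one must invoke the binomial-symmetry equivalence to conclude that $\boldsymbol{\delta}$ is still trivial.
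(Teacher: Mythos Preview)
Your proof is correct and follows essentially the same route as the paper: both reduce, via the lemma characterizing $a_n=0$ for large $n$, to the condition $d_l=-\xi_l^n\overline{d_l}$, translate this into a statement about $\delta_a+\delta_{n-a}$, deduce $\delta_a+\delta_{n-a}=c(-1)^a$, and then split on the parity of $n$. The only cosmetic difference is that where you invert a discrete Fourier transform (all frequencies but $l=2^{r-1}$ vanish), the paper phrases the same step as a polynomial divisibility argument---the polynomial $f(X)=\sum_a(\delta_a+\delta_{n-a})X^a$ is divisible by $(X-1)\prod_{t=1}^{r-1}(X^{2^t}+1)$, hence equals a scalar multiple of it by degree count---and for $n$ odd the paper plugs in two adjacent values of $a$ rather than using your $a\leftrightarrow n-a$ swap. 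One small point: your $\boldsymbol{\delta}^*=((-1)^a m)$ need not literally lie in $\Gamma_j$ (so it is not strictly ``of type (\ref{trivial2})''), but since $\boldsymbol{\delta}^*\sim(1,-1,\dots,-1,1)$ by gcd normalization this does not affect the conclusion; the paper sidesteps this by passing directly to $(1,-1,\dots,1)$.
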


\begin{proof}
We present the proof for the case of a perturbation of the form $\sigma_{n+j,k}+F({\bf X})$.  The general case follows by the same argument.  

Recall that $\{S(\sigma_{n+j,k}+F({\bf X}))\}$ is a real solution to (\ref{mainrecagain}).  Therefore, there exists an integer $n_0$ such that for any $n>n_0$, 
\begin{equation}
 S(\sigma_{n+j,k}+F({\bf X}))=0\,\,\, \text{ if and only if }\,\,\, d_l=-\xi_l^n\overline{d_l}, \,\,\, \text{ for any }0\leq l \leq 2^{r-1}-1,
\end{equation}
where
\begin{eqnarray}
d_l&=&\frac{1}{2^r}\sum_{a=0}^{2^r-1}\left(\sum_{m=0}^j C_m(F)(-1)^{\sum_{i=0}^m \binom{m}{i}\binom{a}{k-i}} \right)\xi_l^a\\\nonumber
&=&\frac{1}{2^r}\sum_{a=0}^{2^r-1} \delta_{a}^{(F)}(k)\cdot \xi_l^a,
\end{eqnarray}
and $r=\lfloor \log_2(k) \rfloor+1$.

Suppose that $n>n_0$ and that $d_l=-\xi_l^n\overline{d_l}$ for any $0\leq l \leq 2^{r-1}-1$.  Observe that
\begin{eqnarray}
 -\xi_l^n\overline{d_l}&=&-\frac{\xi_l^n}{2^r}\sum_{a=0}^{2^r-1}\left(\sum_{m=0}^j C_m(F)(-1)^{\sum_{i=0}^m \binom{m}{i}\binom{a}{k-i}} \right)\xi_l^{-a}\\\nonumber
 &=&-\frac{1}{2^r}\sum_{a=0}^{2^r-1}\left(\sum_{m=0}^j C_m(F)(-1)^{\sum_{i=0}^m \binom{m}{i}\binom{a}{k-i}} \right)\xi_l^{n-a}\\\nonumber
 &=&-\frac{1}{2^r}\sum_{t=n-2^r+1}^{n}\left(\sum_{m=0}^j C_m(F)(-1)^{\sum_{i=0}^m \binom{m}{i}\binom{n-t}{k-i}} \right)\xi_l^{t}\\\nonumber
 &=&-\frac{1}{2^r}\sum_{a=0}^{2^r-1}\left(\sum_{m=0}^j C_m(F)(-1)^{\sum_{i=0}^m \binom{m}{i}\binom{n-a}{k-i}} \right)\xi_l^{a}\\\nonumber
 &=&-\frac{1}{2^r}\sum_{a=0}^{2^r-1}\delta_{n-a}^{(F)}(k)\cdot\xi_l^{a},
\end{eqnarray}
where the previous to the last identity holds because $\left(\sum_{m=0}^j C_m(F)(-1)^{\sum_{i=0}^m \binom{m}{i}\binom{n-a}{k-i}} \right)\xi_l^{a}$ has period $2^r$.  Therefore,
$$d_l=-\xi_l^n\overline{d_l}\,\,\, \text{ for any }0\leq l \leq 2^{r-1}-1,$$ 
holds if and only if
\begin{equation}
\label{alwayszero}
 \sum_{a=0}^{2^r-1} (\delta_{a}^{(F)}(k)+\delta_{n-a}^{(F)}(k)) \xi_l^a=0, \,\,\, \text{ for any } 0\leq l \leq 2^{r-1}-1.
\end{equation}

Let $f(X)$ be the polynomial 
\begin{equation}
\label{thef}
f(X)=\sum_{a=0}^{2^r-1} (\delta_{a}^{(F)}(k)+\delta_{n-a}^{(F)}(k)) X^a.
\end{equation}
Observe that equation (\ref{alwayszero}) implies that all polynomials on the list
$$ X-1, X^2+1, X^4+1,\cdots, X^{2^{r-1}}+1$$
divide $f(X)$.  But all these polynomials are irreducible in $\mathbb{Q}[X]$, therefore
\begin{equation}
 (X-1)\prod_{t=1}^{r-1} (X^{2^t}+1)\,\, \text{ divides }\,\, f(X).
\end{equation}
However, the degree of both $(X-1)\prod_{t=1}^{r-1} (X^{2^t}+1)$ and $f(X)$ is $2^r-1$. Since $\mathbb{Q}[X]$ is a Unique Factorization Domain, then it follows that
\begin{equation}
\label{eqfcy1}
 F(X)=z\cdot (X-1)\prod_{t=1}^{r-1} (X^{2^t}+1),
\end{equation}
for some constant $z$ (it is not hard to see that $z$ must be an integer).  But
\begin{equation}
\label{eqcy2}
(X-1)\prod_{t=1}^{r-1} (X^{2^t}+1)= -1+X-X^2+X^3-X^4+\cdots-X^{2^r-2}+X^{2^r-1}.
\end{equation}
Equations (\ref{thef}), (\ref{eqfcy1}) and (\ref{eqcy2}) yield
\begin{equation}
\label{deltaa}
 \delta_{a}^{(F)}(k)+\delta_{n-a}^{(F)}(k) = (-1)^{a-1}z \text{ for }0\leq a\leq 2^r-1.
\end{equation}
Note that equation (\ref{deltaa}) holds beyond the range $0\leq a\leq 2^r-1$ because $\delta_{a}^{(F)}(k)$ has period $2^r$.

Thus, when $n$ is big enough, equation (\ref{deltaa}) characterizes all solutions
$$(\delta_0^{(F)}(k),\delta_1^{(F)}(k),\delta_2^{(F)}(k),\cdots, \delta_n^{(F)}(k))$$
to the Diophantine equation (\ref{dioph}) over the set $\Gamma_j^{(e)}$ that come from our perturbation. The next step is to show that all of them are trivial.

Suppose first that $n$ is odd, that is, suppose that $n=2m+1$.  We know that
\begin{equation}
 \delta_{a}^{(F)}(k)+\delta_{2m+1-a}^{(F)}(k) = (-1)^{a-1}z,
\end{equation}
where $z$ is a fixed integer.  Let $a=m$, then 
\begin{equation}
\label{forodd1}
 \delta_{m}^{(F)}(k)+\delta_{m+1}^{(F)}(k) = (-1)^{m-1}z.
\end{equation}
On the other hand, let $a=m+1$.  Then,
\begin{equation}
\label{forodd2}
\delta_{m+1}^{(F)}(k)+\delta_{m}^{(F)}(k) = (-1)^{m}z.
\end{equation}
Equations (\ref{forodd1}) and (\ref{forodd2}) imply that $z=0$.  But then
\begin{equation}
 \delta_{n-a}^{(F)}(k) = -\delta_{a}^{(F)}(k)
\end{equation}
and we conclude that the perturbation is trivially balanced when $n$ is odd.

Suppose now that $n$ is even, i.e. $n=2m$. As before, we know that 
\begin{equation}
\label{trivialQ}
 \delta_{a}^{(F)}(k)+\delta_{2m-a}^{(F)}(k) = (-1)^{a-1}z,
\end{equation}
where $z$ is a fixed integer. If $z=0$, then it is clear that the perturbation is trivially balanced.  Thus, suppose that $z\neq 0$.  Let $a=m$ in equation (\ref{trivialQ}).  Then,
\begin{equation}
 2\delta_{m}^{(F)} = (-1)^{m-1}z
\end{equation}
and therefore $z$ is even.  Say $z=2z_0$ with $z_0$ a non-zero integer.  Then, $\delta_{m}^{(F)} = (-1)^{m-1}z_0$ and
\begin{align}\nonumber
 (\delta_{0}^{(F)}(k),\delta_{1}^{(F)}(k),&\cdots, \delta_{2m}^{(F)}(k))\\
 &\sim(\delta_{0}^{(F)}(k)+\delta_{2m}^{(F)}(k),\delta_{1}^{(F)}(k)+\delta_{2m-1}^{(F)}(k),\cdots, \delta_{m-1}^{(F)}(k)+\delta_{m+1}^{(F)}(k), \delta_{m}^{(F)}(k),0,0,\cdots,0)\\\nonumber
 &\sim(2z_0,-2z_0,\cdots, (-1)^m2z_0, (-1)^{m-1}z_0,0,0,\cdots,0)\\\nonumber
 &\sim(2,-2,\cdots, (-1)^m2, (-1)^{m-1},0,0,\cdots,0)\\\nonumber
 &\sim(1,-1,1,-1,\cdots,-1,1).
\end{align}
We conclude that the perturbation is also trivially balanced when $n$ is even. 

Since it is clear that trivially balanced implies balanced, then we conclude that there is an integer $n_0$ such that for any $n>n_0$, the perturbation $\sigma_{n,k}+F({\bf X})$ 
is balanced if and only if it is trivially balanced.  This concludes the proof.

\end{proof}

\section{Some examples of sporadic balanced perturbations}
\label{sporadicpert}

In the previous section we learned that, excluding the trivial cases, balanced perturbations of fixed degree do not exist when the number of variables grows. Thus, as in the case for symmetric Boolean 
functions, it is of great interest to find sporadic balanced perturbations.  Next are some examples of sporadic balanced perturbations and their corresponding solutions to (\ref{gensec}).  
\begin{example}
  The perturbation $\sigma_{24,14}+X_1+X_2$ is balanced and sporadic.  The solution to (\ref{gensec}) over $\Gamma_2$ that corresponds to $S(\sigma_{24,14}+X_1+X_2)$ is 
  \begin{equation}
  \label{consequence1}
  \binom{22}{12}-\binom{22}{13}-\binom{22}{14}+\binom{22}{15}=0. 
  \end{equation}
  Observe that Theorem \ref{samepert} implies that $\sigma_{25,15}+X_1+X_2$ is also balanced and sporadic.  The solution to (\ref{gensec}) over $\Gamma_2$ that corresponds to $S(\sigma_{25,15}+X_1+X_2)$ is 
  \begin{equation}
  \label{particular3term}
   \binom{23}{13}-2\binom{23}{14}+\binom{23}{15}=0.
  \end{equation}
  Equation (\ref{particular3term}) corresponds to one solution to the following three consecutive binomial coefficients Diophantine equation
  \begin{equation}
  \label{3termequation}
   \binom{n}{k+2}-2\binom{n}{k+1}+\binom{n}{k}=0.
  \end{equation}
  Luca and Szalay \cite{luca} studied equations of the form
  \begin{equation}
   A\binom{n}{k}+B\binom{n}{k+1}+C\binom{n}{k+2}=0,
  \end{equation}
  where $A, B, C\in \mathbb{Z}$, $A>0$, $C\neq 0$ and $\gcd(A,B,C)=1$.  As part of their study, they provided infinitely many solutions to (\ref{3termequation}) given by 
  \begin{equation}
   \label{general3term}
   \binom{t^2-2}{(t^2+t-4)/2}-2\binom{t^2-2}{(t^2+t-2)/2}+\binom{t^2-2}{(t^2+t)/2}=0,
  \end{equation}
  where $t$ is any integer satisfying $|t|\geq 3$.  Observe that (\ref{particular3term}) corresponds to (\ref{general3term}) when $t=5$.  Moreover, (\ref{consequence1}) can be obtained from (\ref{particular3term})
  by applying the identity
  \begin{equation}
   \binom{n}{k}=\binom{n-1}{k}+\binom{n-1}{k-1}.
  \end{equation}
\end{example}
  
 \begin{example}
 In \cite{singmaster}, Singmaster shows that the Diophantine equation
 \begin{equation}
 \binom{n}{k}+ \binom{n}{k+1}= \binom{n}{k+2},
 \end{equation}
 has infinitely many solutions given by $n=F_{2i+2}F_{2i+3}-1$ and $k=F_{2i}F_{2i+3}-1$, where $F_n$ represent the $n$-th Fibonacci number.  The smallest of these solutions is given by
 \begin{eqnarray}
 \label{firstFibo}
\binom{F_{4}F_{5}-1}{F_{2}F_{5}-1}+ \binom{F_{4}F_{5}-1}{F_{2}F_{5}}- \binom{F_{4}F_{5}-1}{F_{2}F_{5}+1}&=&  0\\\nonumber
 \binom{14}{4}+ \binom{14}{5}-\binom{14}{6} &=&  0.
 \end{eqnarray}
 Observe (\ref{firstFibo}) is a non-trivial solution to (\ref{gensec}) over $\Gamma_1$.  A nice problem is to find sporadic balanced perturbations of the form $\sigma_{15,[k_1,\cdots,k_s]}+X_1$ that
 corresponds to  (\ref{firstFibo}).  There are only four of such perturbations of degree less than or equal to 14.  The first one is $\sigma_{15,[5,6,10,12,13]}+X_1$, which corresponds to the equivalent 
 solution
 \begin{equation}
 \binom{14}{4}-\binom{14}{6}+\binom{14}{9}=0.
 \end{equation}
 Another is $\sigma_{15,[6, 8, 9, 10,13]}+X_1$, which corresponds to the equivalent solution 
 \begin{equation}
 \binom{14}{5}-\binom{14}{8}+\binom{14}{10}=0.
 \end{equation}
 The third one is $\sigma_{15,[6, 7, 11, 12,14]}+X_1$ and it corresponds to the equivalent solution 
 \begin{equation}
 \binom{14}{5}-\binom{14}{6}+\binom{14}{10}=0.
 \end{equation}
 Finally, the last one is $\sigma_{15,[5, 6, 7, 8, 9, 11,14]}+X_1$.  The corresponding equivalent solution is
 \begin{equation}
 \binom{14}{4}-\binom{14}{8}+\binom{14}{9}=0.
 \end{equation} 
 \end{example} 
 
\begin{example}
 Let us go back to the equation (\ref{general3term}).  Note that the smallest solution to this equation is given by
 \begin{equation}
 \label{smallsporadic}
  \binom{7}{4}-2\binom{7}{5}+\binom{7}{6}=0.
 \end{equation}
 Table \ref{perturbationsks7} includes all sporadic balanced perturbations of the form $\sigma_{8,[k_1,\cdots, k_s]}+X_1$ for $k_s\leq 7$ for which their corresponding solutions to the Diophantine 
 equation (\ref{gensec}) are equivalent to (\ref{smallsporadic}).
 \begin{table}[h!]
 \caption{Perturbations and their corresponding solutions to (\ref{gensec}).}
 \begin{tabular}{|l|l|}
 \hline
 Perturbation & Corresponding solution\\
 \hline
  $\sigma_{8,[3,6]}+X_1$ &   $(0, 0, 1, -1, 0, 1, -1, 0)$\\
  $\sigma_{8,[1,2,6]}+X_1$ &  $(1, 0, -1, 0, 1, -1, 1, -1)$\\
  $\sigma_{8,[1,5,6]}+X_1$ &   $(1, -1, 1, -1, 0, 1, 0, -1)$\\
  $\sigma_{8,[2,3,5,6]}+X_1$ &  $(0, 1, -1, 0, 1, -1, 0, 0)$\\
  $\sigma_{8,[1,4,7]}+X_1$ &   $(1, -1, 1, 0, -1, 1, 0, -1)$\\
  $\sigma_{8,[2, 3, 4, 7]}+X_1$ &   $(0, 1, -1, 1, 0, -1, 0, 0)$\\
  $\sigma_{8,[3, 4, 5, 7]}+X_1$ &   $(0, 0, 1, 0, -1, 1, -1, 0)$\\
  $\sigma_{8,[1, 2, 4, 5, 7]}+X_1$ &  $(1, 0, -1, 1, 0, -1, 1, -1)$\\
  \hline
 \end{tabular}
 \label{perturbationsks7}
 \end{table}

 \noindent
 Table \ref{perturbationsks8} includes all sporadic balanced perturbations of the form $\sigma_{9,[k_1,\cdots, k_s]}+X_1+X_2$ for $k_s\leq 8$ for which their corresponding solutions to the Diophantine equation (\ref{gensec}) are 
 equivalent to (\ref{smallsporadic}).
 
 \begin{table}[h!]
 \caption{Perturbations and their corresponding solutions to (\ref{gensec}).}
 \begin{tabular}{|l|l|}
 \hline
 Perturbation & Corresponding solution\\
 \hline
  $\sigma_{9,[3,6]}+X_1+X_2$ &   $(0, 0, 1, -1, 0, 1, -1, 0)$\\
  $\sigma_{9,[3,7]}+X_1+X_2$ & $(0, -1, 2, -1, 0, 0, 0, 0)$\\
  $\sigma_{9,[6, 7]}+X_1+X_2$ & $(0, 0, 0, 0, -1, 2, -1, 0)$\\
  $\sigma_{9,[1, 3, 7]}+X_1+X_2$ & $(2, -1, 0, -1, 2, -2, 2, -2)$\\
  $\sigma_{9,[1, 4, 7]}+X_1+X_2$ & $(2, -2, 1, 1, -2, 1, 1, -2)$\\
  $\sigma_{9,[1, 6, 7]}+X_1+X_2$ & $(2, -2, 2, -2, 1, 0, 1, -2)$\\
  $\sigma_{9,[1, 2, 3, 7]}+X_1+X_2$ & $(1, 0, 1, -2, 1, 1, -1, -1)$\\
  $\sigma_{9,[1, 2, 4, 7]}+X_1+X_2$ & $(1, 1, -2, 2, -1, 0, 0, -1)$\\
  $\sigma_{9,[1, 2, 6, 7]}+X_1+X_2$ & $(1, 1, -1, -1, 2, -1, 0, -1)$\\
  $\sigma_{9,[1, 3, 4, 6, 7]}+X_1+X_2$ & $(2, -1, -1, 2, -1, -1, 2, -2)$\\
  $\sigma_{9,[1, 2, 3, 4, 6, 7]}+X_1+X_2$ & $(1, 0, 0, 1, -2, 2, -1, -1)$\\
  $\sigma_{9,[5,8]}+X_1+X_2$ &  $(0, 0, 0, -1, 2, -2, 1, 0)$\\
  $\sigma_{9,[2, 5, 8]}+X_1+X_2$ & $(-1, 1, 1, -2, 1, 1, -2, 1)$\\
  $\sigma_{9,[3, 4, 5, 8]}+X_1+X_2$ & $(0, -1, 1, 1, -2, 1, 0, 0)$\\
  $\sigma_{9,[3, 5, 6, 8]}+X_1+X_2$ & $(0, -1, 2, -2, 1, 0, 0, 0)$\\
  $\sigma_{9,[4, 5, 6, 8]}+X_1+X_2$ & $(0, 0, -1, 2, -1, -1, 1, 0)$\\
  $\sigma_{9,[2, 3, 4, 5, 8]}+X_1+X_2$ & $(-1, 2, -2, 2, -1, 0, -1, 1)$\\
  $\sigma_{9,[2, 3, 5, 6, 8]}+X_1+X_2$ & $(-1, 2, -1, -1, 2, -1, -1, 1)$\\
  $\sigma_{9,[2, 4, 5, 6, 8]}+X_1+X_2$ & $(-1, 1, 0, 1, -2, 2, -2, 1)$\\
  \hline
 \end{tabular}
 \label{perturbationsks8}
 \end{table}
 \end{example}
Similar examples can be produced with the aid of computers.  A {\em Mathematica} implementation can be found in 
$$\texttt{http://emmy.uprrp.edu/lmedina/papers/diophpert/}.$$
For example, using this implementation, we found that there are 265 sporadic balanced perturbations of the form $\sigma_{n,[k_1,\cdots,k_s]}+X_1$ with $n, k_s\leq 17$.  Also, there are 606 sporadic balanced 
perturbations of the form $\sigma_{n,[k_1,\cdots,k_s]}+X_1+X_2$ with $n, k_s\leq 17$.  The reader is invited to use this Mathematica implementation to find more.

\medskip \medskip
\noindent
{\bf Acknowledgments.} 
The second author was partially supported as a student by NSF-DUE 1356474 and the
Mellon-Mays Undergraduate Fellowship.

\bibliographystyle{plain}

\begin{thebibliography}{1}
 
\bibitem{sperber} A. Adolphson and S. Sperber. 
\newblock $p$-adic Estimates for Exponential Sums and the of Chevalley-Warning.
\newblock {\it Ann. Sci. Ec. Norm. Super.}, $4^{e}$ s\'erie,  {\bf 20}, 545--556, 1987.

\bibitem{ax} J. Ax.
\newblock Zeros of polynomials over finite fields. 
\newblock {\it Amer. J. Math.}, {\bf 86}, 255--261, 1964.

 
\bibitem{cai} J. Cai, F. Green and T. Thierauf. 
\newblock On the correlation of symmetric functions.
\newblock {\it Math. Systems Theory}, {\bf 29}, 245--258, 1996.

\bibitem{canteaut} A. Canteaut and M. Videau.
\newblock Symmetric Boolean Functions 
\newblock {\it IEEE Transactions on Information Theory}, {\bf 51}, 2791--2807, 2005.

\bibitem{cgm} F. N. Castro, O. Gonz\'alez and L. A. Medina.
\newblock A divisibility approach to the open boundary cases of Cusick-Li-St$\check{\mbox{a}}$nic$\check{\mbox{a}}$'s conjecture.
\newblock {\it Cryptography and Communications}, {\bf 7(4)}, 379--402, 2015.

\bibitem{cm1} F. N. Castro and L. A. Medina. 
\newblock Linear Recurrences and Asymptotic Behavior of Exponential Sums of Symmetric Boolean Functions. 
\newblock {\it Elec. J. Combinatorics}, 18:\#P8, 2011.

\bibitem{cm2} F. N. Castro and L. A. Medina. 
\newblock Asymptotic Behavior of Perturbations of Symmetric Functions.  
\newblock {\it Annals of Combinatorics}, 18:397--417, 2014.

\bibitem{cm3} F. N. Castro and L. A. Medina. 
\newblock Modular periodicity of exponential sums of symmetric Boolean functions.
\newblock {\it Discrete Appl. Math.}, {\bf 217}, 455--473, 2017.

\bibitem{cusick3} T. W. Cusick, Y. Li.
\newblock  $k$-th order symmetric SAC Boolean functions and bisecting binomial coefficients 
\newblock {\it Discrete Appl. Math.}, {\bf 149},, 73--86, 2005.

\bibitem{cls} T. W. Cusick, Y. Li, and  P. St$\check{\mbox{a}}$nic$\check{\mbox{a}}$.
\newblock Balanced Symmetric Functions over $GF(p)$.
\newblock {\it IEEE Trans. on Information Theory}, {\bf 5}, 1304--1307, 2008.

\bibitem{cls2} T. W. Cusick, Y. Li, and P. St$\check{\mbox{a}}$nic$\check{\mbox{a}}$. 
\newblock On a conjecture for balanced symmetric Boolean functions.
\newblock {\it J. Math. Crypt.}, {\bf 3}, 1--18, 2009.

\bibitem{ggz} Y. Guo, G. Gao, Y. Zhao.
\newblock Recent Results on Balanced Symmetric Boolean Functions.
\newblock {\it IEEE Trans. Inf. Theory}, {\bf 62(9)}, 5199--5203, 2016.

\bibitem{ims} E. J. Iona\c{s}cu, T. Martinsen, P. St$\check{\mbox{a}}$nic$\check{\mbox{a}}$.
\newblock Bisecting binomial coefficients.
\newblock arXiv:1610.02063 [math.CO], 2016.

\bibitem{jeff} N. Jefferies
\newblock Sporadic partitions of binomial coefficients.
\newblock {\it Electr. Lett.}, {\bf 27}, 1334--1336, 1991.

\bibitem{fspectrum} M. Kolountzakis, R. J. Lipton, E. Markakis, A. Metha and N. K. Vishnoi. 
\newblock On the Fourier Spectrum of Symmetric Boolean Functions.
\newblock {\it Combinatorica}, {\bf  29}, 363--387, 2009.

\bibitem{luca} F. Luca and L. Szalay.
\newblock Linear diophantine equations with three consecutive bionomial coefficients.
\newblock Acta Academiae Paedagogicae Agriensis, Sectio Mathematicae {\bf 31} (2004), 53--60.

\bibitem{mitchell}  C. Mitchell.
\newblock Enumerating Boolean functions of cryptographic significance.
\newblock {\em J. Cryptology}, {\bf 2(3)}, 155--170, 1990.

\bibitem{mm1} O. Moreno and C. J. Moreno.
\newblock Improvement of the Chevalley-Warning and the Ax-Katz theorems.
\newblock {\it Amer. J. Math.}, {\bf 117}, 241--244, 1995.

\bibitem{mm} O. Moreno and C. J. Moreno.
\newblock The MacWilliams-Sloane Conjecture on the Tightness of the Carlitz-Uchiyama Bound and the Weights of Dual of BCH Codes.
\newblock {\it IEEE Trans. Inform. Theory}, {\bf 40},  1894--1907, 1994.

\bibitem{mcsk}  O. Moreno, K. Shum, F. N. Castro and P.V. Kumar.
\newblock Tight Bounds for Chevalley-Warning-Ax Type Estimates, with Improved Applications. 
\newblock {\it Proc.  of the London Mathematical Society}, {\bf 88}, 545--564, 2004.

\bibitem{sarkarmaitra} P. Sarkar and S. Maitra.
\newblock Balancedness and correlation immunity of symmetric Boolean functions.
\newblock {\it Discrete Math.}, {\bf 307}, 2351--2358, 2007.

\bibitem{fdegree} A. Shpilka and A. Tal.
\newblock On the Minimal Fourier Degree of Symmetric Boolean Functions.
\newblock {\it Combinatorica}, {\bf 88}, 359--377, 2014.

\bibitem{singmaster}  D. Singmaster.
\newblock Repeated binomial coefficients and Fibonacci numbers.
\newblock {\it Fibonacci Quarterly} {\bf 13(4)}, 295--298, 1975.

\end{thebibliography}

\end{document}